\documentclass[a4paper,11pt]{article}
\setlength{\textwidth}{17.3cm} \setlength{\textheight}{24.2cm}
\addtolength{\hoffset}{-3cm} \addtolength{\voffset}{-3cm}
\usepackage{amsmath}
\usepackage{amsfonts}
\usepackage{amsthm}
\usepackage{hyperref}
\usepackage{array}
\usepackage[hang,small]{caption}
\usepackage{graphicx}
\usepackage{tikz}
\usepackage{natbib}
\bibliographystyle{abbrvnat}
\setcitestyle{authoryear,open={},close={}}


\usepackage{mathrsfs}
\usepackage [english,french] {babel}
\usepackage[utf8]{inputenc}

\newcommand{\R}{\mathbb{R}}

\newtheorem{de}{Definition}[subsection] 
\newtheorem{theo}{Theorem}[section]

\newtheorem{properties}[theo]{Properties}

\newtheorem{remark}{Remark}
\newtheorem{lemma}{Lemma}[section]

\title{Bistability induced by generalist natural enemies can reverse pest invasions 
}

\author{\textsc{Sten Madec}$^{a}$, \textsc{J\'{e}r\^{o}me Casas}$^{b,c}$, \textsc{Guy Barles}$^{a}$, \textsc{Christelle Suppo}$^{b}$\\
%
[1mm] $^{a}${\small \it  LMPT, UMR CNRS 7350, Universit\'e François-Rabelais de Tours, 37200 Tours - France}\\
$^{b}${\small \it  IRBI, UMR CNRS 7261, 
Université François-Rabelais de Tours,  37200 Tours, France}\\
$^{c}${\small \it Institut Universitaire de France}
}

\begin{document}
\maketitle

\noindent \textbf{Key words}: Reaction Diffusion system, long time dynamics, traveling wave, invasion process, biological control, prey-predator interaction, generalist predator\\

\noindent \textbf{2010 Mathematics Subject Classification}:  	35B40, 35K57, 92D25, 92D40,  92B99  \\

{\small {\noindent \bf Abstract.}
\noindent
Reaction-diffusion analytical modeling of predator-prey systems has shown that specialist natural enemies can slow, 
stop and even reverse pest invasions, assuming that the prey population displays a strong Allee effect in its growth. 
Few additional analytical results have been obtained for other spatially distributed predator-prey systems, as traveling waves of non-monotonous systems  are notoriously difficult to obtain.
Traveling waves have indeed recently been shown to exist in predator-prey systems, but the direction of the wave, 
an essential item of information in the context of the control of biological invasions, is generally unknown. 
Preliminary numerical explorations have hinted that control by generalist predators might be possible 
for prey populations displaying logistic growth. We aimed to formalize 
the conditions in which spatial biological control can be achieved by generalists, 
through an analytical approach based on reaction-diffusion equations.\\


The population of the focal prey — the invader — is assumed to grow according to a logistic function. 
The predator has a type II functional response and is present everywhere in the domain, at its carrying capacity, 
on alternative hosts. Control, defined as the invader becoming extinct in the domain, may result from spatially 
independent demographic dynamics or from a spatial extinction wave.
Using comparison principles, we obtain sufficient conditions for control and for 
invasion, based on scalar bistable partial differential equations (PDEs).
The searching efficiency and 
functional response plateau of the predator are identified as the main parameters defining the parameter space for prey extinction and invasion. 
Numerical explorations are carried out in the region of those control parameters space between the super- and subsolutions, 
in which no conclusion about controllability can be drawn on the basis of analytical solutions. \\

 The ability of generalist predators to control prey populations with logistic growth lies in the bistable dynamics of the coupled system, 
rather than in the bistability of prey-only dynamics as observed for specialist predators attacking prey populations displaying Allee effects.  
Analysis of the ordinary differential equations (ODEs) system identifies parameter regions with monostable (extinction) and 
bistable (extinction or invasion) dynamics. By contrast, analysis of the associated PDE system distinguishes different and additional 
regions of invasion and extinction. Depending on the relative positions of these different zones, four patterns of spatial dynamics can be identified:
traveling waves of extinction and invasion, pulse waves of extinction and heterogeneous stationary positive solutions of the Turing type. 
As a consequence, prey control is predicted to be possible when space is considered in additional 
situations other than those identified without considering space. The reverse situation is also possible. None of these considerations apply to spatial
predator-prey systems with specialist natural enemies. 
The consideration of space in predator-prey systems involving generalist predators with a parabolic functional response
is thus crucial.
}

\section{Introduction}

\subsection{ Modeling the biological control of invasive pests }

 Biological invasions are a major contemporary problem (\cite{Pimentel,Garnier,Mistro,Potapov,Wang,Savage})
 for which few solutions are available, all of which are very costly.
The use of natural enemies for the biological control of invading insects is one of the most promising possibilities 
(\cite{Moffat,Li,Ye,Basnet}).
As invasion is essentially a spatial process, the potential of natural enemies to stop or even reverse an invasion is of particular interest.
The fundamental analytical work of \cite{owenlewis} showed that {\it specialist} predators could potentially slow, stop or reverse the spread of
invasive pests.
The reversal of pest spread by specialist predation requires a strong Allee effect for the pest-only dynamics, 
defined as a negative growth rate for the prey population at low density. In the presence of a weak Allee effect, 
the predator can stop, but not reverse the wave of invasion. These conclusions have been confirmed in several other theoretical studies (\cite{Cai2014,Boukal2007,Petro2009}).

Generalist predators can also control prey effectively (\cite{Ebarch2014,Chakraborty}). 
Their use could be promoted through conservation biological control programs without the need for exogenous specialist natural enemies. 
Unfortunately, the role of generalist predators in the spatial control of their prey has been much less studied than that of specialist predators,
due to the intrinsic difficulties of having to work with a {\it system} of equations rather than with a {\it single} scalar equation. However, 
two important studies have been carried out in this area: the analytical and comprehensive study of \cite{DuShi2007}, 
and the preliminary simulation study of \cite{Fagan2002}. Both used the same model structure as we do here, with logistic growth for both prey and predator populations,
and a type II functional response for predators. The convergence of these models was strengthened further by the in-depth analysis of \cite{Magal}
in which space was not considered. It is difficult to use these models in a spatial context: the work of \cite{DuShi2007} cannot deal with invasion
and traveling waves, because it deals with a bounded space. The numerical simulations of \cite{Fagan2002} are restricted to a few parameter values. 
They are, however, valuable, because they suggest conditions in which a generalist predator might be able to stop, and even reverse 
the invasion wave of a pest population displaying logistic growth. {Fagan {\it et al}. also reported the results of field studies indicating that predators
with diffusion coefficients higher than those of their prey are poor control organisms. The authors provided an explanation for this finding founded on 
logical arguments, but without a firm mathematical foundation.} This result has been confirmed by a few numerical simulations including space, as reported by
\cite{Magal}, revealing a strong dependence of system dynamics on the relative rates of diffusion of the prey and the predator. It is thus important
to take space into account, by considering diffusion coefficients of both predator and prey. This conclusion accounts for the interest of scientists in questions of this type (\cite{Lewis2013,Hastings2000,Roos1991,Roos1998}).

There are therefore hopes that it might be possible to extend the conditions for the control of invasive prey organisms to (i)
generalist predators and (ii) prey populations displaying growth patterns not dependent on the restrictive assumption of Allee effects.
Such control approaches would have a major impact in the field, {given the high degree of generalism obtained}. The aim of this study was, therefore to formalize the conditions in 
which spatial biological control can be achieved by generalists, through an analytical approach based on traveling waves solutions
of reaction-diffusion equations. \\
%
%
Traveling wave solution  describes a constant profile $U$ moving through space at a speed $c$.
Such waves are  often observed in nonlinear reaction-diffusion systems modeling various phenomena. 
They  are particularly suitable for describing the propagation of invasive fronts.
In systems modeling a single species, described by  a {\it scalar} equation, this type of solution is  very well understood 
(\cite{Fischer,KPP} and \cite{volperts} for {a} complete theory). 
Two particular classes of equations can be distinguished: monostable equations (like the Fisher-KPP equations)
and bistable equations (often modeling the Allee effect).
In monostable equations, there is a minimal wave speed $c^*$ such that, for any $c\geq c^*$, a wave solution  
with speed $c$ exists. 
In bistable equations traveling waves exist for {\it a unique} speed $c^*$. 
{The sign of this speed $c^*$ distinguishes between invasion or extinction of prey, 
which  is a key property for our purposes.}

For interactions of several species (described by a {\it multidimensional} system), the situation is much more complex. 
However, for some type of interaction, cooperation for instance, the system possesses a strong structural property, namely monotonicity.
Essentially, this monotony makes it possible to use the comparison principle, which is always possible for one-dimensional systems, and the theory 
is then complete  (see \cite{volperts}).
Unfortunately, our system, and prey-predator systems in general, do not have {such a} monotonous structure.
This method is then unsuitable for monotonous systems and only a few results have been published. 
 One of the key reasons for this is as follows: when
we search for traveling wave solutions for a system with $N$ equations, we obtain a system of $N$ second order ordinary 
equations that
can be reformulated as a system of $2N$ first order ordinary equations. In the scalar case ($N=1$)
, it is therefore possible to study trajectories in a plane,{ available}
using classical tools for two-dimensional dynamical systems. 
For several species ($N>1$),  it is necessary to study trajectories in a $2N$-dimensional space, which may be very difficult.

 Hence, the first rigorous results demonstrating the existence of traveling waves in prey-predator systems were based on a generalization,
 to the fourth dimension, of the classical 
 shooting method in the phase plane
(\cite{Dunbar1,Dunbar2}). This approach has since been generalised  (\cite{Huang,Xuweng}). 
However, all these studies simply investigate the mere {\it existence} of traveling waves. 
They do not determine the direction of the wave or the global dynamics for general initial conditions.
Other methods have recently been developed  in similar models 
(\cite{HuangWeng,Ducrot1}),  but they are subject to the same limitations. 
A last  approach is to  use the  degree theory  (see e.g. \cite{Gio,volperts}) to obtain the existence of traveling waves. These homotopy methods  may  occasionally give some information on the speed $c$. Unfortunately, this needs additional estimates which are very difficult to obtain here. We therefore required another method.
%
%
%
%

The analysis provided in  \cite{Magal} gives conditions for 
preys' control by predators, but this analysis was carried out largely without reference to space.
Thus, we have extended the system of  \cite{Magal} by adding spatial diffusion.
We find that the conditions for control are very different from those for the system in which space is not considered. 
The conditions for prey extinction and invasion are discussed in terms of two essential parameters:
the encouter rate $E$ and the handling time $h$. 
Increasing $E$ clearly increases predator pressure. Conversely, increasing $h$ decreases predator pressure.\\

The paper is organized as follows. 
In section  \ref{sec2} we present the mathematical model and the main result of this work: theorem \ref{thODE} 
describes invasion conditions for the ODE system and the theorems \ref{thcontrol} and \ref{thnocontrol} 
the invasion conditions for the PDE system. 
The mathematical results are completed by numerical simulations in section \ref{simu}.
The results are discussed in section \ref{ccl}.
The final section \ref{Proofs} is devoted to the mathematical proofs.

\section{Model and main results}\label{sec2}

\subsection{Mathematical model}\label{secadim}

We analyze a system of partial differential equations for a prey population with logistic growth, and a generalist predator
population with logistic growth on alternative prey in the absence of the invading host. 
The functional response is of Holling type II. 
The prey-predator interactions are modeled by the following partial differential equation system: 
\begin{equation}
\left\{\begin{array}{lr}
\partial_t u=D_u \Delta_x u+r_1 u\left(1-\frac{u}{K_1}\right)-\frac{Euv}{1+Ehu},&\\
\partial_t v=D_v \Delta_x v+r_2 v\left(1-\frac{v}{K_2}\right)+\gamma\frac{Euv}{1+Ehu}, & x \in \mathbb{R},\; t \in \mathbb{R}^{+} \\
u(x,0)=u_0(x),\quad v(x,0)=1 &  
\end{array}\right.
\end{equation}
with:\\
\begin{tabular}{ll}
$ u (t,x) =$ prey density at time t and at point x. &
 $ v (t,x) =$ predator density at time t and at point x.\\
 $D_u =$ diffusion rate of prey&
$ D_v =$ diffusion rate of predators\\
$ r_1 =$ growth rate of prey&
$ r_2 =$ growth rate of predators\\
$K_1 =$ carrying capacity of prey&
$ K_2 =$ carrying capacity of predators in absence of focal prey\\
$ E =$ encounter rate&
$ h =$ handling time\\
$\gamma =$ conversion efficiency& $u_0\geq 0$ the initial concentration of prey, \\
\end{tabular}
$D_u, D_v, r_1, r_2, K_1, K_2, E, h \text{ and } \gamma$ are positive constant parameters.\\

\noindent We carried out the following adimensionalization: 
\begin{itemize}
\item[] $t'=r_1 t$; $x'=x\sqrt{\frac{r_1}{D_u}}$; $u'(x',t')=u(t,x)/K_1$ ; $v'(x',t')=v(t,x)/K_2$
\item[]$ d'=D_v/D_u$ ; $r'=r_2/r_1$ ; $E'= E K_2/r_1$ ; $h'=r_1 h K_1/K_2$ ; $\gamma'=\gamma K_1/K_2$ ; $\alpha=\frac{\gamma'}{r'}$.
\end{itemize}
Removing the sign ’ to simplify the notation, the system reads

\begin{equation}\label{sys2}
\left\{\begin{array}{lr}
\partial_t u=  \Delta_x u+ u\left(1-u\right)-\frac{Euv}{1+Ehu},& x \in \mathbb{R}, t \in \mathbb{R}^{+}\\
\partial_t v=d \Delta_x v+r \left( v\left(1-v\right)+\alpha\frac{Euv}{1+Ehu}\right)&\\
\end{array}\right.
\end{equation}
 with the initial conditions\footnote{ 
 All our results remain true for various different initial conditions. 
 The essential condition is  that the solutions of the scalar systems we consider converge to traveling wave solutions.
In particular, compact support may be allowed for  $u_0$. See \cite{fife} for a detailed discussion.
}
\begin{equation}\label{CI}
\left\{ 
\begin{array}{l}

 u(0,x)=u_0(x)\in[0,1]\;;\; \lim\limits_{x\to-\infty} u_0(x)=1\;;\; \lim\limits_{x\to+\infty} u_0(x)=0\\
v(0,x)=1. 
\end{array}\right.
\end{equation}

\subsection{Main results}\label{mains}
%
We distinguish two ways in which a predator can control the prey, one taking space into account and the other not considering this factor
(mathematical definitions are provided in definition \ref{definv}).

\begin{itemize}
\item The spatially {\it uniform extinction} results exclusively from {\it local} demographic processes and is independent of space.

\item The {\it extinction wave} is due to both  demographic  and  diffusive processes 
and may take various forms, from a traveling front to a pulse. 
\end{itemize}
\noindent Conversly, invasion is defined as prey survival and we distinguish two ways in which the prey can invade. 
\begin{itemize}
\item The spatially {\it uniform invasion}, which is independent of  space.
\item
The {\it non-uniform  invasion},  described by various {\it spatial} dynamics, 
from Turing phenomena to invasion waves.

\end{itemize}

\begin{de}\label{definv}
 Let  $(u_0(x),v_0(x))$ be an initial condition verifying \eqref{CI} and
  $(u(t,x),v(t,x))$ be the corresponding solution of \eqref{sys2}.
 \begin{itemize}
  \item  {\it Extinction} of  prey  occurs if  
 $$\forall\, x\in\R,\lim_{t\to+\infty} u(t,x)=0.$$
 \item Prey extinction  is   {\it uniform}  if it is uniform with respect to $x\in\R$, that is, if  there exists a map $\phi(t)$ verifying
 $$\forall\, x\in\R,\,\forall t>t_0,\; 0\leq u(t,x)\leq \phi(t)\text{ and } \lim_{t\to+\infty} \phi(t)=0.$$
 \item Prey extinction  is {\it non uniform}  if 
there is extinction but no  uniform extinction.
\item {\it Invasion} of prey  occurs if there is no extinction, that is if 
$$\exists x\in\R,\; \limsup_{t\to+\infty}  u(t,x)>0$$
 \end{itemize}
\end{de}

\subsubsection{Analysis of the associated ODE system}\label{mainsode}

\noindent If space is not taken into account,  system \eqref{sys2} may be rewritten as follows.

\begin{equation}\label{sysODE}
\left\{\begin{array}{lr}
 \frac{d}{dt}u=  u\left(1-u\right)-\frac{Euv}{1+Ehu},&  t \in \mathbb{R}^{+}\\
\frac{d}{dt} v=r \left(v\left(1-v\right)+\alpha\frac{Euv}{1+Ehu}\right).&\\
0 < u(0)=u_0  \leq 1 \leq v(0)=v_0   & 
\end{array}\right.
\end{equation}
System ~\eqref{sysODE} is well understood (\cite{Magal}). 
Indeed, it is clear that there are always three trivial stationary states: $(0,0)$ and $(1,0)$, which are unstable 
and $(0,1)$, which is asymptotically stable if, and only if, $E>1$.
Moreover, there are no more than three non-trivial positive steady states. We are interested principally in the case $E>1$.
In this case, there are either no or two stationary positive steady states. If the two steady states exist, denoted $(\widehat{u},\widehat{v})$ and $(u^*,v^*)$
with  $\widehat{u}<u^*$ and $\widehat{v}<v^*$, then $(\widehat{u},\widehat{v})$ is always unstable 
and $(u^*,v^*)$ is most often  stable. In this case, there are two stable nonnegative solutions, $(0,1)$ and $(u^*,v^*)$ and the system is bistable.

We are interested principally in the conditions for prey extinction.
If $E<1$, then $(0,1)$ is unstable and no extinction occurs.  We are therefore interested only in the case $E>1$.
Now, if $E>1$, there are two possibilities. 
In the non bistable case, $(0,1)$ is globally stable and  there is extinction.
In the bistable case, provided that $u$ is initially small enough, say $u_0<\mu_1$ for some $0<\mu_1<1$, 
then $u(t)\rightarrow 0$ as $t\rightarrow +\infty$. 
Conversely, if $u$ is initially large enough, say $0<\mu_2<u_0$, then $ u(t) \to u^*$ as $t\rightarrow +\infty$. 
Thus, in this case, the outcome — extinction or invasion — depends on the initial conditions.
 The following result provides an explicit statement of the above in the parameter space $(E\;;\; h)$ and is proven in section \ref{proofode}.
\begin{theo}\label{thODE}
Let $E>1$ and $\alpha\geq 0$ be fixed. 
\begin{itemize}
 \item[(i)] {\bf Existence of positive solutions.}
There exists a unique $h^*=h^*(E,\alpha)$ such that 
\begin{itemize}
 \item 
If $h< h^*$,  then there is no positive stationary solution and  there is  extinction of prey for the ODE system.
\item If $h>h^*(E,\alpha)$, then there exist two positive solutions for the ODE system $(\widehat{u},\widehat{v})$ and $(u^*,v^*)$ with $\widehat{u}<u^*$. 
\end{itemize}
\item[(ii)] {\bf Stability of the solutions.}
Let $h>h^*(E,\alpha)$. The solution $(\widehat{u},\widehat{v})$ is always unstable. \\
Moreover, there exists a unique $h^{**}(E,\alpha)>h^*(E,\alpha)$ such that
\begin{itemize}
\item If $h>h^{**}(E,\alpha)$ then $(u^*,v^*)$ is stable.
\item If $h^*(E,\alpha)<h<h^{**}(E,\alpha)$, the stability of $(u^*,v^*)$ depends on $r$. It is unstable if $r$ is small enough and stable otherwise.
\end{itemize}
\end{itemize}
\end{theo}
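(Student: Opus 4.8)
The plan is to read off everything from the geometry of the two nullclines of \eqref{sysODE} in the $(u,v)$--plane. Away from the axes, the prey nullcline is the graph $v=f_1(u):=\frac{(1-u)(1+Ehu)}{E}$ and the predator nullcline is the graph $v=f_2(u):=1+\frac{\alpha Eu}{1+Ehu}$, so a positive steady state is exactly a point $u\in(0,1)$ with $f_1(u)=f_2(u)$, the corresponding ordinate being $v=f_2(u)\ge 1>0$. Since $E>1$ we have $f_1(0)=1/E<1=f_2(0)$ and $f_1(1)=0<f_2(1)$, hence $f_1<f_2$ near both ends of $(0,1)$: intersections come in pairs and are born in a tangency. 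Eliminating, the equation $f_1(u)=f_2(u)$ is quadratic in $s:=1+Ehu$ with a single positive root $s(u)$ for each $u\in(0,1)$, so it can be solved uniquely for $h$, giving an explicit smooth function $h=\Phi(u):=\frac{s(u)-1}{Eu}$ on $(0,1)$ with $\Phi(u)\to+\infty$ as $u\to0^+$ and as $u\to1^-$; moreover, for fixed $u$ the difference $f_1(u;h)-f_2(u;h)$ is strictly increasing in $h$ and vanishes at $h=\Phi(u)$. The number of positive steady states for a given $h$ is therefore $\#\Phi^{-1}(h)$, and the set $\{u:\ f_1(u)=f_2(u)\}$, $\{u:\ f_1(u)>f_2(u)\}$ are $\Phi^{-1}(h)$ and $\{\Phi<h\}$ respectively.

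For part (i) I would prove that $\Phi$ is unimodal on $(0,1)$ (its derivative vanishes exactly once), set $h^*(E,\alpha)=\min_{(0,1)}\Phi$, and conclude: for $h<h^*$ no $u$ satisfies $\Phi(u)=h$, so there is no positive steady state; for $h>h^*$ there are exactly two, $\widehat u<u^*$ (lying on the decreasing and increasing branches of $\Phi$), with $f_1>f_2$ on $(\widehat u,u^*)$ and $f_1<f_2$ on $(0,\widehat u)\cup(u^*,1)$; and for $h=h^*$ a single degenerate one. Extinction for $h<h^*$ then follows from planar dynamics: solutions from the initial data in \eqref{sysODE} remain bounded in $\{0<u\le1,\ v\ge 1\}$ (from $\dot u\le u(1-u)$, invariance of $\{v\ge1\}$, and $\tfrac{Eu}{1+Ehu}\le 1/h$), the only equilibrium in that region is $(0,1)$, which is a hyperbolic sink when $E>1$, there are no other equilibria in the open quadrant and hence no periodic orbit (any such orbit would enclose one), so by Poincaré–Bendixson every trajectory converges to $(0,1)$; in particular $u(t)\to0$.

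For part (ii) I would compute the Jacobian $J$ at a positive steady state and use the identity $\det J=\partial_vF\,\partial_vG\,(f_2'-f_1')$ together with $\partial_vF=-\tfrac{Eu}{1+Ehu}<0$ and $\partial_vG=-rv<0$, so that $\operatorname{sign}\det J=\operatorname{sign}(f_2'-f_1')$ at the steady state. For $h>h^*$ the crossings of $f_1$ and $f_2$ are simple, with $f_1-f_2$ passing from $-$ to $+$ at $\widehat u$ and from $+$ to $-$ at $u^*$; hence $f_1'>f_2'$ at $\widehat u$, i.e.\ $\det J<0$: $(\widehat u,\widehat v)$ is a saddle, unstable for every $h>h^*$ and every $r$. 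At $u^*$ one gets $\det J>0$, so stability is governed by $\operatorname{tr}J=\partial_uF+\partial_vG=\tfrac{Eu^*}{1+Ehu^*}f_1'(u^*)-rv^*$, which is affine and strictly decreasing in $r$; a short computation gives $\partial_uF=\tfrac{Eu^*}{1+Ehu^*}f_1'(u^*)$, and $f_1'(u)>0\iff u<u_m:=\tfrac{Eh-1}{2Eh}$ (the abscissa of the maximum of the prey-nullcline parabola). Thus if $u^*\ge u_m$ then $\operatorname{tr}J<0$ for all $r>0$ and $(u^*,v^*)$ is asymptotically stable, while if $u^*<u_m$ then $(u^*,v^*)$ is unstable for $r$ small and asymptotically stable for $r$ large. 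One then defines $h^{**}(E,\alpha)$ as the value of $h$ at which $u^*(h)-u_m(h)$ changes sign: near $h^*$ one has $u^*<u_m$ because at the tangency $h=h^*$ one has $f_1'=f_2'>0$ (when $\alpha>0$), forcing the degenerate equilibrium to lie strictly left of $u_m$, whence $h^{**}>h^*$; and as $h\to\infty$, $u^*(h)\to1$ while $u_m(h)\to\tfrac12$, so the sign does change.

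I expect the two ``single crossing'' statements to be the only real obstacles: (a) uniqueness of $h^*$, i.e.\ unimodality of $\Phi$, which I would get by differentiating $\Phi$ (equivalently, by analysing the cubic in $u$ obtained after clearing denominators) and reducing the sign of $\Phi'$ to a polynomial inequality; and (b) uniqueness of $h^{**}$, i.e.\ a single sign change of $u^*(h)-u_m(h)$ on $(h^*,\infty)$, which I would obtain by using that $u^*(h)$ is increasing (it runs along the increasing branch of $\Phi$) and comparing its growth with $u_m'(h)=\tfrac1{2Eh^2}$, or by writing the crossing condition $f_1(u_m)=f_2(u_m)$ as a single polynomial equation in $h$ and checking monotonicity of the relevant expression. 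Everything else is either the standard planar analysis above or a finite, if tedious, computation with the explicit nullclines; the case $\alpha=0$ is degenerate (the $v$-equation decouples, $v\to1$, and one always has $u^*>u_m$, so there is no genuine $r$-dependence) and is treated separately.
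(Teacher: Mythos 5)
Your proposal follows essentially the same route as the paper's proof: positive steady states as intersections of the nullclines $v=f_h(u)$ and $v=g_h(u)$, inversion to $h=\Phi(u)$ (the paper's $h(u)$ in \eqref{formulah}) with $h^*=\min_{(0,1)}\Phi$ and the solution count controlled by the cubic obtained after clearing denominators, then $\det J$ via the slope comparison of the two nullclines at each crossing (saddle at $\widehat u$), and the trace analysis through the vertex $u_m=\frac{Eh-1}{2Eh}$ with $h^{**}$ determined by the crossing condition written as a polynomial equation in $h$ — so the two "obstacles" you flag (unimodality of $\Phi$, uniqueness of $h^{**}$) are settled in the paper exactly by the fallback arguments you name. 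The only substantive differences are to your credit: you supply a Poincaré–Bendixson argument for the global extinction claim when $h<h^*$, which the theorem asserts but the paper's proof of (i) does not actually argue, and you correctly isolate the degenerate case $\alpha=0$, where $g_h'\equiv 0$ makes the tangency argument $f_{h^*}'=g_{h^*}'>0$ (and hence the nonemptiness of the conditional-stability zone $(h^*,h^{**})$) break down.
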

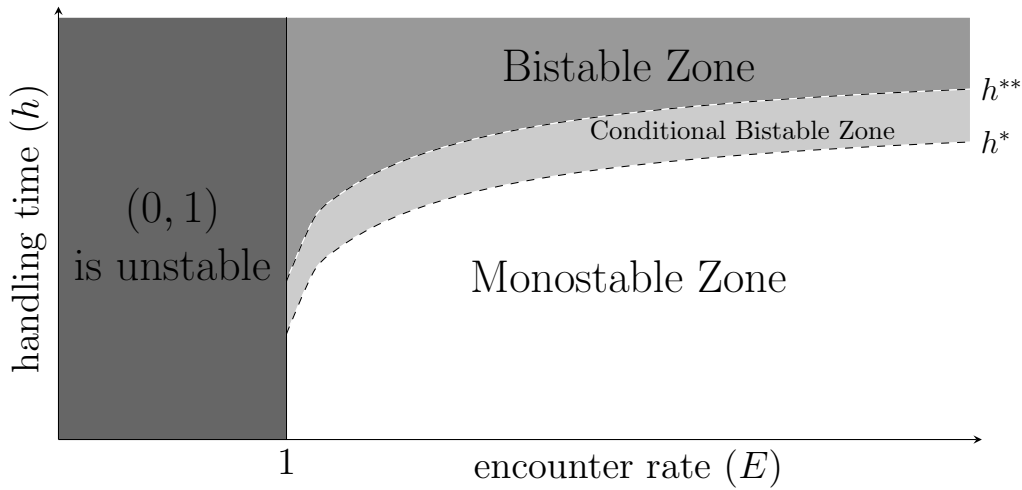
\begin{figure}[h]
\centering
\begin{tikzpicture}[x=3cm,y=0.7cm,>=stealth]
\filldraw[draw=white,fill=black!40] 
 plot[domain=1:4] (\x,8)
 -- plot [domain=4:1,smooth] (\x, {(2-(\x)^(-1)+(10/3)*sqrt(1-(\x)^(-1)))+2})
 -- cycle ;
\filldraw[draw=white,fill=black!60] 
     plot [domain=0:8] (0.0, \x)
  -- plot [domain=8:0] (1, \x)
  -- cycle;
\filldraw[draw=white,fill=black!20] 
     plot  [domain=1:4,smooth] (\x, {(2-(\x)^(-1)+(10/3)*sqrt(1-(\x)^(-1)))+1})
  -- plot   [domain=4:1,smooth] (\x, {(2-(\x)^(-1)+(10/3)*sqrt(1-(\x)^(-1)))+2})
  -- cycle;
\draw [->] (0.0,0) -- (4.05,0);
\draw [->] (0.0,0) -- (0.0,8.2);
\draw [-] (1,0) -- (1,8);
\draw [dashed] plot [domain=1:4,smooth] (\x, {(2-(\x)^(-1)+(10/3)*sqrt(1-(\x)^(-1)))+1})node [right, sloped] { \large $h^*$};
\draw [dashed] plot [domain=1:4,smooth] (\x, {(2-(\x)^(-1)+(10/3)*sqrt(1-(\x)^(-1)))+2})node [right, sloped] { \large $h^{**}$};
\draw (2.5,0) node [below] {\Large encounter rate ($E$)};
\draw (0,4) node [above,rotate=90] {\Large handling time ($h$)};
\draw (1,0) node [below] {\Large 1};
\draw (0.5,2.5) node [above]{ {\LARGE \begin{tabular}{c}$(0,1)$\\ is unstable\end{tabular}}};
 \draw (2.5,2.6) node [above,rotate=0]{ \LARGE Monostable Zone};
 \draw (2.5,6.6) node [above,rotate=0]{ \LARGE Bistable Zone};
  \draw (3,5.5) node [above,rotate=0]{\small Conditional Bistable Zone};
\end{tikzpicture}
\caption{\label{fig0}{
Description of the dynamics of the ODE system \eqref{sysODE} in the $E-h$ plane. 
If $E<1$, the control solution $(0,1)$ is unstable. In this zone there exists at least one positive steady state and the prey never disappears entirely. 
If $E>1$, then the control solution $(0,1)$ is always (locally) stable. 
Moreover, the  $E>1$ zone is the union of three subzones. 
Below the $h^*$ curve, $(0,1)$ is the only non-negative steady state and is a global attractor: this is a monostable zone. 
Above the $h^*$ curve, there are two additional 
positive steady states, one of which is always unstable while the second, denoted $(u^*,v^*)$, may be stable or unstable.
Above the $h^{**}$ curve, $(u^*,v^*)$ is  always  stable.
In this subzone,  the asymptotic behavior depends only on the initial conditions: this is a bistable zone. 
Between the  $h^*$ and the $h^{**}$ curves, the stability of $(u^*,v^*)$ depends on other parameters: 
this is a conditional bistable zone. 
For illustrative purpose, the size of this last subzone has been considerably increased.
}}
\end{figure}
\begin{remark}
 Our calculations show that the gap between $h^*$ and $h^{**}$ is very small, so that, roughly speaking, $(u^*,v^*)$ is stable whenever it exists. 
 However, if $h$ belongs to the conditional stability zone, i.e. $h\in(h^*,h^{**})$, and  if $r$ is very small, 
stability is lost and the system becomes excitable.
This explains, in particular, the presence of pulses for small values of $r$ when dealing with spatial interactions (see section \ref{simu}). 
\end{remark}

\noindent The map $(E,\alpha)\mapsto h^*(E,\alpha)$ has the following properties, as proved in section \ref{proofode}.
\begin{properties}\label{prophet}
Let $\alpha\geq 0$ be fixed. The map $E\mapsto h^*(E,\alpha)$ is increasing and one has the explicit limits:
$$\lim_{E\to 1} h^*(E,\alpha)=\left\{\begin{array}{c}1+\alpha\;\text{ if $\alpha<1$}\\2\sqrt{\alpha}\;\text{ if $\alpha\geq 1$} \end{array}\right.\quad;\quad 
\lim_{E\to+\infty} h^*(E,\alpha)=2+2\sqrt{1+\alpha}.$$
Let $E\geq1$ be fixed. The map $\alpha\mapsto h^*(E,\alpha)$ is increasing and one has the explicit limits:
$$h^*(E,0)=\frac{1}{E}\left(2E-1+2\sqrt{E(E-1)}\right):=h_1(E)\quad\text{and}\quad \lim_{\alpha\to+\infty} h^*(E,\alpha)=+\infty.$$
\end{properties}

\noindent Figure \ref{fig0} illustrates the maps $h^*$ and $h^{**}$ and the possible outcomes for  system \eqref{sysODE}.

\subsubsection{Analysis of the PDE system}\label{mainspde}

We wish to identify the parameter conditions required to obtain prey extinction in the PDE system \eqref{sys2}.
A simple stability analysis shows that, if $E<1$, then invasion occurs in the PDE~system. If $E>1$, then the situation for the PDE~system is more complex. 
In this situation, the spatial structure and diffusion processes result in additional conditions for extinction. 
The rationale is explained {in detail below}.

Let us assume that there is a positive stable stationary solution of \eqref{sys2} denoted by $(u^*,v^*)$ and that the initial condition $u(x,0)$ 
is close to $u^*$ at some places $x$ and close to $0$ at  other places. 
Since both $u^*$ and $0$ are stable, the demographic phenomena lead to an agregation near $u^*$ and an agregation near $0$. 
However, diffusion allows individuals to move around in space, so one of $0$ or $u^*$ may be the final global attractor. 
In other words, there may be a (stable) traveling wave joining $u^*$ to $0$.
The direction of this wave, given by the sign of the speed of the wave, indicates whether extinction or invasion occurs. 
However, there are difficulties associated with this argument.

\begin{itemize}
\item There can be no homogeneous stationary solution of \eqref{sys2}, only stable heterogeneous positive stationary solutions.
In other words, it is possible that $h<h^*(E,\alpha)$ without control occurring. 
\item Even in the case of bistability ($h>h^*(E,\alpha)$), the bistable system \eqref{sys2} is neither competitive nor cooperative. 
Little theoretical knowledge 
is available concerning the occurrence of traveling waves in such systems, with even less known about the stability and direction of the wave. 

\end{itemize}
Using super and subsolutions, we show here how to obtain the conditions sufficient (but not necessary) for extinction and for invasion, 
based on well known {\it scalar} bistable PDEs.
Roughly speaking,  let $(u(t,x),v(t,x))$ be the solution of \eqref{sys2}. If we find a {\it positive constant}  $\underline{v}$ 
such that, for any
\footnote{It suffices that this condition occurs for $t>t_0$ for some $t_0>0$.} $(t,x)\in \R^+\times \R$, $v(t,x)\geq \underline{v}$ then it comes
$$\partial_t u(t,x)-\Delta_x u(t,x)\leq u(t,x)(1-u(t,x))-\frac{E u(t,x)}{1+Eh u(t,x)} \underline{v},\quad t>0,\; x\in\R.$$
Let $\overline{u}$ be the solution of

\begin{equation}\label{eq:sursol}
 \left\{\begin{array}{l}
\partial_t \overline{u}(t,x)-\Delta_x \overline{u}(t,x)= \overline{u}(t,x)(1-\overline{u}(t,x))-\frac{E \overline{u}(t,x)}{1+Eh \overline{u}(t,x)} 
\underline{v},\\
\overline{u}(0,x)\geq u(0,x).
\end{array}\right.
\end{equation}
The comparison principle implies that $\overline{u}(t,x)\geq u(t,x)$.
Now,  if $\overline{u}(x,t) \rightarrow 0$ when $t \rightarrow +\infty$ then $u(t,x)\rightarrow 0$ when $t \rightarrow +\infty$ 
and extinction of prey occurs (see figure \ref{fig:fig1}-(a)).
Moreover, if $\overline{u}(x,t)=\phi(t)$ does not depend on $x$, then there is aspatial control (see figure \ref{fig:fig1}-(b)).\\

\noindent Conversely, if we can identify a  {\it positive constant} $\overline{v}$  such that $v(x,t)\leq \overline{v}$, then  it comes
$$\partial_t u(t,x)-\Delta_x u(t,x)\geq u(t,x)(1-u(t,x))-\frac{E u(t,x)}{1+Eh u(t,x)} \overline{v},\quad t>0,\; x\in\R.$$
Now, define $\underline{u}(t,x)$ as the solution of 
\begin{equation}\label{eq:soussol}
 \left\{\begin{array}{l} 

\partial_t \underline{u}(t,x)-\Delta_x \underline{u}(t,x)= \underline{u}(t,x)(1-\underline{u}(t,x))-\frac{E \underline{u}(t,x)}{1+Eh \underline{u}(t,x)} 
\overline{v},\\
\underline{u}(0,x)\leq u(0,x).
\end{array}\right.
\end{equation}
The comparison principle implies that  $\underline{u}(t,x) \leq u(t,x)$. 
It follows that if for some $x\in\R$, ${\displaystyle \limsup_{t\to+\infty} \underline{u}(x,t) >0}$,  
then ${\displaystyle\limsup_{t\to+\infty}  u(t,x)>0}$ and  there is (non uniform) invasion (see figure \ref{fig:fig2}).\\

These arguments give rise to the following theorems yielding sufficient conditions, in terms of the parameters $E$, $\alpha$ and $h$,
for extinction or invasion to occur. All theorems are proven in section \ref{Proofs}.
We begin with a sufficient condition for uniform extinction. 
\begin{figure}[htb]
\centering
\begin{tikzpicture}[x=0.12cm,y=0.12cm,>=stealth]
\draw [->] (-10,0) -- (50,0);
\draw [->] (-10,0) -- (-10,35);
\draw [-, line width=1.5 pt] (-10,15) -- (0,15) node [midway, above, sloped] {\scriptsize  $u(x,t)$}.. controls (4,15) and (6,30) .. (9,25)  ..controls (10,10) and (20,0.1) .. (30,0.3) ..controls (40,0.1) .. (50,0.01)  ;
\draw [-, dashed] (-10,28) -- (25,28) node [midway,above,sloped]  {\scriptsize  $\overline{u}(x,t)$} ..controls (32,30) and (28,0) .. (35,1)  ..controls (40,0) .. (50,0);
\draw [->,line width=0.5] (33,5) -- (29,5);
\draw [->,line width=0.5] (30,25) -- (26,25);
\draw (20,0) node [below]  {space ($x$)};
\draw (-10,17.5) node [above,rotate=90]  {prey density ($u$)};
\draw [->] (60,0) -- (120,0);
\draw [->] (60,0) -- (60,35);
\draw [-, line width=1.5 pt] (60,15) -- (70,15) node [midway, above, sloped] {\scriptsize  $u(x,t)$}  .. controls (74,15) and (76,30) .. (79,25)  ..controls (80,10) and (90,0.1) .. (100,0.3)..controls (110,0.1) .. (120,0.01);
\draw [-, dashed] (60,28) -- (70,28) node [midway,above,sloped]  {\scriptsize $\phi(t)$} ..controls (80,28) .. (120,28);
\draw [->,line width=0.5] (72,28) -- (72,25);
\draw [->,line width=0.5] (92,28) -- (92,25);
\draw [->,line width=0.5] (112,28) -- (112,25);
\draw (90,0) node [below]  {space ($x$)};
\draw (60,17.5) node [above,rotate=90]  {prey density ($u$)};
\draw (20,35) node [above] {\bf (a) Extinction };
\draw (90,35) node [above] {\bf (b) Uniform extinction};
\end{tikzpicture}
\caption{(a) Sufficient condition for extinction.  The solution $u(t,x)$ is majored by a 
supersolution $\overline{u}(t,x)$. If ${\displaystyle \lim_{t\to+\infty}\overline{u}(t,x)=0}$, 
then ${\displaystyle \lim_{t\to+\infty}u(t,x)=0}$ and there is extinction.\\
(b) Sufficient condition for uniform extinction. The  solution $u(t,x)$ is majored by a 
supersolution $\phi(t)$ which does not depend on $x$. If ${\displaystyle \lim_{t\to +\infty} \phi(t)=0}$, 
then ${\displaystyle \lim_{t\to +\infty} u(t,x)=0}$ uniformly in $x$ and there is uniform extinction.
}
\label{fig:fig1}
\end{figure}
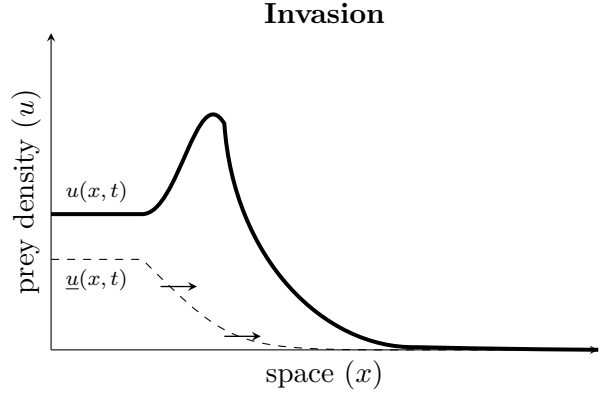
\begin{figure}[htb]
\begin{tikzpicture}[x=0.12cm,y=0.12cm,>=stealth]
\draw(0,0) node {$\,$};
\draw [->] (50,0) -- (110,0);
\draw [->] (50,0) -- (50,35);
\draw [-, line width=1.5 pt] (50,15) -- (60,15) node [midway, above, sloped] {\scriptsize  $u(x,t)$}.. controls (64,15) and (66,30) .. (69,25)  ..controls (70,10) and (80,0.1) .. (90,0.3) ..controls (100,0.1) .. (110,0.01);
\draw [-, dashed] (50,10) -- (60,10) node [midway,below,sloped]  {\scriptsize $\underline{u}(x,t)$} ..controls (70,00) .. (90,0) -- (110,0);
\draw [->,line width=0.5] (62,7) -- (66,7);
\draw [->,line width=0.5] (69,1.5) -- (73,1.5);
\draw (80,0) node [below]  {space ($x$)};
\draw (50,17.5) node [above,rotate=90]  {prey density ($u$)};
\draw (80,35) node [above] {\bf Invasion};
\end{tikzpicture}
\caption{
Sufficient condition for invasion. The  solution $u(t,x)$ is minored by a 
subsolution $\underline{u}(t,x)$. If ${\displaystyle \limsup_{t\to +\infty} \underline{u}(t,x)>0}$, 
then ${\displaystyle \limsup_{t\to +\infty} u(t,x)>0}$ and there is invasion.
}
\label{fig:fig2}
\end{figure}
\begin{theo}\label{thaspcontrol}
Let $E>1$ and define 
$$h_1(E)=\frac{1}{E}\left(2E-1+2\sqrt{E(E-1)}\right).$$
If $h<h_1(E)$ then there is  uniform extinction. 
In other words, for any initial condition verifying \eqref{CI}, there exists $\phi(t)\geq 0$  such that 
any solution $(u(t,x),v(t,x))$ of \eqref{sys2} verifies
 $$\forall\, x\in\R,\; 0\leq u(t,x)\leq \phi(t)\text{ and } \lim_{t\to+\infty} \phi(t)=0.$$
\end{theo}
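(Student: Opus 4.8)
The plan is to run the two scalar comparison arguments sketched just above the statement, and the first ingredient is a uniform lower bound for the predator. Since $u\geq 0$ and $\alpha\geq 0$, the second equation of \eqref{sys2} gives $\partial_t v-d\Delta_x v\geq r\,v(1-v)$, so $v$ is a supersolution of the scalar logistic equation $\partial_t w-d\Delta_x w=r\,w(1-w)$; because the constant $w\equiv 1$ solves this equation and coincides with $v(0,\cdot)$, the scalar comparison principle yields $v(t,x)\geq 1$ for all $t\geq 0$ and $x\in\R$. (As a side remark one also gets $0\leq u\leq 1$ by comparing $u$ with the supersolution $1$ of its own equation, which guarantees that every comparison below takes place on a bounded range where the reaction terms are Lipschitz; and, as in the footnote, it would be enough to know $v\geq 1$ only for $t\geq t_0$.)

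Feeding $v\geq 1$ into the prey equation gives $\partial_t u-\Delta_x u\leq f(u)$, where $f(s):=s(1-s)-\dfrac{Es}{1+Ehs}$, so $u$ is a subsolution of the scalar bistable-type PDE $\partial_t\overline u-\Delta_x\overline u=f(\overline u)$. I would then take as supersolution the spatially homogeneous function $\overline u(t,x)=\phi(t)$, where $\phi$ solves the ODE $\phi'=f(\phi)$ with $\phi(0)=1$. Since $\phi(0)=1\geq u_0(x)=u(0,x)$ by \eqref{CI}, the comparison principle gives $u(t,x)\leq\phi(t)$ for all $t,x$, which is exactly the $x$-independent bound required for uniform extinction — provided one shows $\phi(t)\to 0$.

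The heart of the proof is therefore the elementary claim that \emph{$h<h_1(E)$ forces $f(s)<0$ for every $s\in(0,1]$}; granting this, $\phi$ is strictly decreasing, bounded below by $0$, hence converges to the unique zero of $f$ in $[0,1)$, namely $0$. Writing $f(s)=\dfrac{s}{1+Ehs}\bigl(g(s)-E\bigr)$ with $g(s)=(1-s)(1+Ehs)$, the claim amounts to $\max_{[0,1]}g<E$. If $Eh\leq 1$ then $g'(s)=(Eh-1)-2Ehs\leq 0$ on $[0,1]$, so $\max_{[0,1]}g=g(0)=1<E$. If $Eh>1$ the maximum is attained at $s^\star=\tfrac{Eh-1}{2Eh}\in(0,1)$, where $g(s^\star)=\tfrac{(Eh+1)^2}{4Eh}$, and $g(s^\star)<E$ is equivalent to the quadratic inequality $E^2h^2+2E(1-2E)h+1<0$, whose roots are $\tfrac{1}{E}\bigl(2E-1\pm 2\sqrt{E(E-1)}\bigr)$; thus $g(s^\star)<E$ holds precisely when $h$ lies strictly between these two roots. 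The smaller root is $\leq 1/E<h$ automatically in this case, and the larger root is exactly $h_1(E)$, so $h<h_1(E)$ suffices — which, reassuringly, is the same threshold $h^*(E,0)$ appearing in Properties~\ref{prophet}. This disposes of both cases and completes the argument.

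I do not anticipate a real obstacle: the computation of $\max_{[0,1]}g$ and the identification of $h_1(E)$ as the critical root is the only place needing attention, together with the routine bookkeeping of the scalar comparison principle on the invariant ranges $u\in[0,1]$ and $v\in[1,\sup v]$. Uniqueness of solutions of \eqref{sys2} (locally Lipschitz, bounded nonlinearities) makes the phrase ``any solution'' in the statement unambiguous.
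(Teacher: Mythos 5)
Your proposal is correct and follows essentially the same route as the paper: comparison with the logistic equation gives $v\geq 1$, then the spatially homogeneous supersolution $\phi$ solving $\phi'=\phi(1-\phi)-\frac{E\phi}{1+Eh\phi}$, $\phi(0)=1$, dominates $u$, and $h<h_1(E)$ is exactly the condition under which this scalar ODE has no positive equilibrium in $(0,1)$ so that $\phi(t)\to 0$. Your explicit verification that $\max_{[0,1]}(1-s)(1+Ehs)<E$ is just a hands-on version of the discriminant computation the paper delegates to the $\alpha=0$ polynomial analysis in the proof of Properties~\ref{prophet}.
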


If $h>h_1(E)$, then there can be invasion or extinction. The following theorem gives a sufficient condition for extinction to occur. 
\begin{theo}\label{thcontrol}
Let $E>1$  be fixed and let $(u,v)$ be the solution of \eqref{sys2}-\eqref{CI}. Define $\underline{v}=1$  and let $\overline{u}$ be a solution of \eqref{eq:soussol} together with  $\overline{u}(0,x)=u(0,x)$. There exists a unique $h^-=h^-(E)>h_1(E)$,  such that 
\begin{itemize}
\item If $h<h^-(E)$, then $\forall x\in \R,$ $\lim\limits_{t\to+\infty} \overline{u}(t,x)=0$
\item If $h>h^-(E)$, then $\forall x\in \R,$ $\lim\limits_{t\to+\infty} \overline{u}(t,x)=\overline{\mu}$ where $\overline{\mu}=\overline{\mu}(E,h)$ is a positive scalar.\end{itemize}
As a consequence,  if $h<h^-(E)$  there is extinction of prey. 
\end{theo}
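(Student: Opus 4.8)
\emph{Overall strategy and reduction.}
The statement concerns only the scalar equation \eqref{eq:sursol} with $\underline v=1$, namely $\partial_t\overline u=\Delta_x\overline u+f(\overline u)$ with
\[
f(s)=f_{E,h}(s):=s(1-s)-\frac{Es}{1+Ehs}=s\Big(1-s-\frac{E}{1+Ehs}\Big),
\]
subject to the front-like data \eqref{CI}. (That $\underline v=1$ is admissible, hence that $0\le u(t,x)\le\overline u(t,x)$ by the comparison principle, follows by comparing the $v$-equation — whose last term is nonnegative — with $\partial_t w=d\Delta_x w+rw(1-w)$, $w(0,\cdot)\equiv1$, whose solution is $w\equiv1$; so $v\ge1$.) Now I would describe the shape of $f$. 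Since $E>1$, $f'(0)=1-E<0$, so $0$ is a nondegenerate stable zero; moreover $f(s)<s(1-s)\le0$ for $s\ge1$. The positive zeros of $f$ solve $Ehs^2-(Eh-1)s+(E-1)=0$, a quadratic whose discriminant vanishes exactly at $h=h_1(E)$ — the $\alpha=0$ case of the computation behind Theorem \ref{thODE} and Properties \ref{prophet}. Thus for $h\le h_1(E)$ one has $f<0$ on $(0,\infty)$ and the ODE comparison $\overline u(t,x)\le y(t)$, $y'=f(y)$, $y(0)=1$, forces $\overline u(t,x)\to0$; this regime will lie inside the announced range since $h^-(E)$ will turn out to exceed $h_1(E)$. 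For $h>h_1(E)$, $f$ has two simple positive zeros $0<\widehat s<s^*<1$, with $f<0$ on $(0,\widehat s)\cup(s^*,\infty)$, $f>0$ on $(\widehat s,s^*)$, and $f'(\widehat s)>0>f'(s^*)$: a nondegenerate bistable nonlinearity with stable states $0$ and $s^*$.

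\emph{Front and global convergence.}
For $h>h_1(E)$ the classical theory of bistable scalar reaction–diffusion equations (\cite{fife,volperts}) yields a unique, up to translation, traveling front $\overline u=U(x-ct)$ with $U(-\infty)=s^*$, $U(+\infty)=0$, $U'<0$; multiplying the profile equation $U''+cU'+f(U)=0$ by $U'$ and integrating over $\R$ gives
\begin{equation}\label{speed-sign}
c\int_{\R}(U')^2\,dx=\int_0^{s^*}f(s)\,ds=:J(h),\qquad\text{so}\qquad\operatorname{sign}(c)=\operatorname{sign}\big(J(h)\big).
\end{equation}
Since $\overline u\in[0,1]$ and $\limsup_{t\to\infty}\overline u(t,x)\le s^*$ uniformly in $x$ (again by ODE comparison, as $f<0$ on $(s^*,1]$), and since $u_0$ satisfies \eqref{CI} with $\lim_{x\to-\infty}u_0=1>\widehat s$ and $\lim_{x\to+\infty}u_0=0<\widehat s$, the solution is eventually front-like with the correct limits; the Fife–McLeod stability theorem (the scalar convergence isolated in the footnote to \eqref{CI}; see \cite{fife}) then provides $\xi_0\in\R$ with $\overline u(t,\cdot)-U(\cdot-ct-\xi_0)\to0$ uniformly as $t\to\infty$. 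Fixing $x$ and letting $t\to\infty$: if $c<0$ then $x-ct\to+\infty$, hence $\overline u(t,x)\to0$; if $c>0$ then $x-ct\to-\infty$, hence $\overline u(t,x)\to s^*$.

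\emph{The threshold $h^-$ and the conclusion.}
It remains to prove that $J$ changes sign exactly once on $(h_1(E),\infty)$. Since $\partial_h f_{E,h}(s)=E^2s^2/(1+Ehs)^2>0$ for $s>0$ and $f_{E,h}(s^*(h))=0$, differentiation under the integral sign gives $\frac{d}{dh}J(h)=\int_0^{s^*(h)}E^2s^2/(1+Ehs)^2\,ds>0$, so $J$ is strictly increasing. As $h\downarrow h_1(E)$ the two zeros coalesce to some $\bar s$ and $J(h)\to\int_0^{\bar s}f_{E,h_1}(s)\,ds<0$ (the integrand being $<0$ on $(0,\bar s)$); as $h\to\infty$, $f_{E,h}(s)\to s(1-s)$ locally uniformly and $s^*(h)\to1$, so $J(h)\to\int_0^1 s(1-s)\,ds=\tfrac16>0$. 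Hence there is a unique $h^-=h^-(E)\in(h_1(E),\infty)$ with $J(h^-)=0$, $J<0$ on $(h_1(E),h^-)$ and $J>0$ on $(h^-,\infty)$. Collecting everything: for $h<h^-(E)$ — whether $h\le h_1(E)$ or $h_1(E)<h<h^-(E)$ — one has $\overline u(t,x)\to0$ for every $x$; for $h>h^-(E)$ one has $\overline u(t,x)\to s^*(h)=:\overline\mu>0$. Finally, when $h<h^-(E)$ the inequality $0\le u(t,x)\le\overline u(t,x)$ forces $u(t,x)\to0$ for all $x$, i.e. extinction of prey in the sense of Definition \ref{definv}.

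\emph{Main obstacle.}
The substantial input is the global convergence of front-like data to the traveling front (together with its existence/uniqueness and the sign formula \eqref{speed-sign}): this is where the nondegeneracy $f'(0),f'(s^*)<0<f'(\widehat s)$ and the trapping of $\overline u$ below $s^*+\varepsilon$ are needed, and it is exactly the scalar-convergence fact the paper puts in the footnote to \eqref{CI}. The borderline case $h=h^-(E)$, for which the front is standing and $\overline u$ has no pointwise limit, is deliberately excluded from the statement; the rest — the quadratic analysis of the zeros of $f$ and the monotonicity of $J$ — is routine.
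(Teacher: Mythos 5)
Your proposal is correct and follows the same overall architecture as the paper: the comparison argument giving $v\ge 1$ and hence $u\le\overline u$, the reduction to the scalar bistable equation, the unique traveling front and its sign-of-speed characterization $\operatorname{sign}(c)=\operatorname{sign}\bigl(\int_0^{u^+}f\bigr)$ obtained by multiplying the profile equation by $U'$ and integrating (the paper's Lemma \ref{lemmaspeed}), convergence of front-like data to the front via \cite{fife}, and then a monotonicity-plus-endpoints analysis of $h\mapsto\mathcal W(E,h)=\int_0^{u^+(E,h)}f_{E,h}$ to produce $h^-(E)$ (the paper's Lemma \ref{finallemma}). Where you genuinely diverge is in the two computational steps of that last lemma, and your versions are simpler: you get $\partial_h\mathcal W>0$ by differentiating under the integral sign and using $f_{E,h}(u^+)=0$, instead of the paper's explicit antiderivative \eqref{Wexplicit} and the inequality $z-2\ln(1+z)+\tfrac{z}{1+z}>0$; and, more notably, you get $\mathcal W(E,h_1(E))<0$ in one line from the perfect-square factorization of the reaction term at the saddle-node value $h_1$ (so $f_{E,h_1}\le 0$ on $(0,\bar s)$), whereas the paper proves this by setting $g(E)=\mathcal W(E,h_1(E))$, computing $g'(E)$, invoking $2z/(z+2)<\ln(1+z)$ and $g(1)=0$ — a strictly longer route to the same sign. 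You also treat the range $h\le h_1(E)$ explicitly by the monostable ODE comparison, which the paper delegates to Theorem \ref{thaspcontrol}. One shared loose end: at the single value $h=h_1(E)$ your claim ``$f<0$ on $(0,\infty)$'' fails at the degenerate zero $\bar s$, so the constant supersolution only yields $\limsup\overline u\le\bar s$; the paper's proof has the same borderline issue (its bistable front argument needs $h>h_1$ strictly), so this does not distinguish the two, but if you want the stated dichotomy to cover $h=h_1$ literally you would need a small additional argument there.
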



\noindent The map $E\mapsto h^-(E)$ verifies the following properties proved in section \ref{proofcontrol}.
\begin{properties}\label{prophm}
 The map $E\mapsto h^-(E)$ is increasing and admits the following explicit limits:
 $$\lim_{E\to 1} h^-(E)=1\quad;\quad\lim_{E\to +\infty} h^-(E)=\frac{16}{3}.$$
\end{properties}
\noindent Our last result gives a sufficient condition for invasion to occur.
\begin{theo}\label{thnocontrol}
Let $E>1$ and $\alpha \geq 0$ be fixed and let $(u,v)$ be the solution of \eqref{sys2}-\eqref{CI}. Define $\overline{v}=1+\alpha\frac{E}{1+Eh}$  and let $\underline{u}$ be a solution of \eqref{eq:soussol} together with  $\underline{u}(0,x)=u(0,x)$. There exists a unique $h^+=h^+(E,\alpha)>h^*(E,\alpha)$  such that 
\begin{itemize}
\item If $h<h^+(E,\alpha)$, then $\forall x\in \R,$ $\lim\limits_{t\to+\infty} \underline{u}(t,x)=0$
\item If $h>h^+(E,\alpha)$, then $\forall x\in \R,$ $\lim\limits_{t\to+\infty} \underline{u}(t,x)=\underline{\mu}$ where $\underline{\mu}=\underline{\mu}(E,h)$ is a positive scalar.\end{itemize}
As a consequence,  if $h>h^+(E,\alpha)$   there is invasion of prey. 

\end{theo}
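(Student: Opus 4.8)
The plan is to exploit the fact that, by construction, the predator component $v$ of the solution to \eqref{sys2}--\eqref{CI} is uniformly bounded above by the constant $\overline{v}=1+\alpha\frac{E}{1+Eh}$, so that $\underline{u}$ solving \eqref{eq:soussol} with this $\overline{v}$ is a genuine subsolution of the $u$-equation, giving $\underline{u}(t,x)\le u(t,x)$ for all $t,x$. The heart of the matter is then a \emph{scalar} bistable reaction--diffusion equation
\[
\partial_t \underline{u}-\Delta_x\underline{u}=f(\underline{u}),\qquad f(s)=s(1-s)-\frac{E s}{1+Ehs}\,\overline{v},
\]
with the Heaviside-type initial data inherited from \eqref{CI}. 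So the real work is to analyze the one-dimensional dynamics of $f$ as $h$ varies, and to invoke the classical convergence-to-traveling-wave theory for scalar bistable equations (Fife--McLeod; cf.\ \cite{fife,volperts}) to translate the sign of the wave speed into the asymptotic behavior of $\underline{u}$.

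First I would study the zeros of $f$ on $[0,1]$. Since $E>1$, one checks $f'(0)=1-E\overline{v}<0$, so $0$ is a stable rest point. A short computation (clearing the denominator) shows that the positive zeros of $f$ are the roots of a quadratic $Eh s^2 - (Eh-E+1)s + (E\overline{v}-1)=0$; I would show that there is a threshold value of $h$ — call it $\widetilde h(E,\alpha)$ — below which this quadratic has no root in $(0,1)$ (so $f<0$ on $(0,1]$, making the scalar equation monostably attracted to $0$), and above which it has two roots $0<\mu_-<\mu_+<1$ with $f<0$ on $(0,\mu_-)$, $f>0$ on $(\mu_-,\mu_+)$, $f<0$ on $(\mu_+,1]$, i.e.\ $f$ is of bistable type with stable states $0$ and $\mu_+=\underline{\mu}$. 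Monotonicity and uniqueness of the threshold follow because increasing $h$ decreases $\overline{v}$ and simultaneously relaxes the functional response, both pushing $f$ upward; one should verify this monotonicity carefully since $h$ enters $\overline v$, the denominator $1+Ehs$, \emph{and} the quadratic's coefficients at once.

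Next, in the bistable regime, the relevant quantity is the sign of $\int_0^{\mu_+} f(s)\,ds$: by the Fife--McLeod theorem the scalar equation with front-like initial data converges to the unique traveling wave connecting $\mu_+$ (at $-\infty$) to $0$ (at $+\infty$), and the wave invades (i.e.\ $\underline{u}\to\mu_+$ everywhere) precisely when $\int_0^{\mu_+} f>0$. I would therefore define $h^+=h^+(E,\alpha)$ as the value of $h$ at which $\int_0^{\mu_+(h)} f_h(s)\,ds$ changes sign, show this integral is increasing in $h$ (so $h^+$ is unique), and check $h^+>h^*(E,\alpha)$ — the latter because at $h=h^*$ the homogeneous ODE system is only marginally bistable while the subsolution's effective predation $\overline v$ is strictly larger than the true $v=1$ used in the ODE analysis, so one needs $h$ strictly beyond $h^*$ before the subsolution front turns around. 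Combining: for $h<h^+$ the scalar subsolution satisfies $\underline u(t,x)\to 0$ (either monostable decay or a receding bistable front), while for $h>h^+$ we get $\underline u(t,x)\to\underline\mu:=\mu_+>0$ for every $x$; since $u\ge\underline u$, in the latter case $\limsup_{t\to\infty}u(t,x)>0$ and invasion occurs by Definition \ref{definv}.

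The main obstacle I anticipate is \emph{not} the traveling-wave machinery (which is standard for scalar bistable equations) but rather the bookkeeping that shows $h^+(E,\alpha)>h^*(E,\alpha)$ together with the monotonicity/uniqueness of $h^+$: one must compare the sign condition $\int_0^{\mu_+} f_h>0$ for the subsolution nonlinearity (with predator level $\overline v>1$) against the existence/stability thresholds for the genuine ODE system (with predator level $1$), and control how the roots $\mu_\pm(h)$ and the integral move as $h$ increases when $h$ appears nonlinearly in three places at once. A secondary technical point is justifying convergence to the traveling wave for the specified initial data \eqref{CI} — here I would either cite the discussion in \cite{fife} referenced in the paper's footnote, or note that the monotone data can be squeezed between shifted translates of the wave profile so that the comparison principle forces convergence.
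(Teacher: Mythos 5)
Your proposal is correct and follows the paper's overall strategy — bound $v\leq\overline{v}$, compare $u$ from below with the solution of the scalar bistable equation \eqref{eq:soussol}, and decide between retreat and advance of the bistable front through the sign of $\int_0^{\mu_+}f$ — but you execute the key step differently. The paper never analyzes the $h$-dependent nonlinearity directly: it rescales $\widetilde{E}=E\overline{v}$, $\widetilde{h}=h/\overline{v}$, so that \eqref{eq:soussol} becomes exactly the equation already treated in Theorem \ref{thcontrol}, and the invasion criterion becomes the implicit inequality $h>\overline{v}(E,h,\alpha)\,h^-(E\overline{v}(E,h,\alpha))$; existence and uniqueness of $h^+$ then follow from the monotonicity of $\mathcal{F}_{E,\alpha}(h)=h-\overline{v}\,h^-(E\overline{v})$, using $\partial_h\overline{v}<0$ and the growth of $h^-$. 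You instead keep the nonlinearity $f(s)=s(1-s)-E\overline{v}(h)s/(1+Ehs)$ as it stands and argue that $h\mapsto\int_0^{\mu_+(h)}f$ is increasing; this is legitimate and in fact slightly cleaner on that point, since $\partial_h f(s)>0$ pointwise (both because $\overline{v}$ decreases in $h$ and because the denominator $1+Ehs$ grows) and the boundary contribution vanishes because $f(\mu_+)=0$. What the paper's rescaling buys is wholesale reuse of the wave-speed analysis (Lemmas \ref{lemmaspeed} and \ref{finallemma}) and of the limits of $h^-$, which is how Properties \ref{prophp} are then obtained almost for free; your route must redo the endpoint estimates (integral strictly negative at the onset of bistability, positive limit as $h\to+\infty$) for the $h$-dependent $\overline{v}$ — easy, but not free.

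Two small repairs. First, the quadratic giving the positive zeros of $f$ should read $Ehs^2-(Eh-1)s+(E\overline{v}-1)=0$, not $Ehs^2-(Eh-E+1)s+(E\overline{v}-1)=0$; this does not affect the structure of the argument. Second, your justification of $h^+>h^*(E,\alpha)$ is loose: $h^*$ is not defined through a ``predator level $1$'' but through tangency of the isoclines $v=f_h(u)=\frac{1}{E}(1-u)(1+Ehu)$ and $v=g_h(u)=1+\alpha\frac{Eu}{1+Ehu}$ of the ODE system (proof of Theorem \ref{thODE}). The clean observation is that $\overline{v}=g_h(1)\geq g_h(u)$ on $[0,1]$, so a positive zero $\mu_+$ of your nonlinearity satisfies $f_h(\mu_+)=\overline{v}\geq g_h(\mu_+)$ while $f_h(1)=0<g_h(1)$, forcing a crossing $f_h=g_h$ in $[\mu_+,1)$; hence bistability of \eqref{eq:soussol} already implies $h\geq h^*$, and $h^+>h^*$ follows because at the onset of bistability your integral is still strictly negative.
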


\noindent Finally, the following result specifies the behavior of the map  $h^+$.
\begin{properties}\label{prophp}
The maps $E\mapsto h^+(E,\alpha)$ and $\alpha\mapsto h^+(E,\alpha)$ are increasing. For any $E>1$, $h^+(E,0)= h^-(E)$ and $\lim_{\alpha\to+\infty} h^+(E,\alpha)=+\infty$.
Finally,  $\alpha\geq 0$ being fixed, one has the explicit limit
$$\lim_{E\to +\infty} h^+(E,\alpha)=\frac{8}{3}\left(1+\sqrt{1+\frac{3}{4}\alpha}\right).$$
\end{properties}
\begin{remark}
 The limit ${\displaystyle \lim_{E\to 1} h^+(E,\alpha)}$ remains unknown. 
 However, it can be proved that this limit exists and is greater than ${\displaystyle\lim_{E\to 1} h^*(E,\alpha)}$.
\end{remark}
\noindent The results above are summarized in  figure \ref{fig2}.
In the domain  $\{(E,h),\;E>1,\; h^-(E)<h<h^+(E,\alpha)\}$,  which we will refer to as the ‘transition zone’, 
it is not possible to draw any conclusions concerning whether prey invasion or extinction is likely to occur.
Indeed, this zone can be separated into two subzones, according to the parameters values:
$$\text{Zone I}=\{(E,h),\;E>1,\; max(h^*(E,\alpha),h^-(E) )<h<h^+(E,\alpha) \},$$
$$\text{Zone II}=\{(E,h),\;E>1,\; h^-(E)<h<h^*(E,\alpha)\}.$$
In  Zone I, our numerical simulations show non-monotonous traveling waves. In  zone II,  simulations show various types of behavior, 
including pulse and even heterogeneous positive stationary solutions. This phenomena are discussed in the section \ref{simu}.
\begin{figure}[ht!]
\centering
\begin{tikzpicture}[x=15cm,y=1cm,>=stealth]
\filldraw[draw=white,fill=blue!50] 
 plot[domain=2:1] (\x,0)
 -- plot [domain=1:2,smooth] (\x, {(2-(\x)^(-1)+2*sqrt(1-(\x)^(-1)))})
 -- cycle ;
 \filldraw[draw=white,fill=blue!20] 
    plot [domain=2:1,smooth] (\x, {(2-(\x)^(-1)+(10/3)*sqrt(1-(\x)^(-1)))})
 -- plot [domain=1:2,smooth] (\x, {(2-(\x)^(-1)+2*sqrt(1-(\x)^(-1)))})
 -- cycle ;
  \filldraw[draw=white,fill=red!20] 
    plot [domain=2:1,smooth] (\x, {(2-(\x)^(-1)+(10/3)*sqrt(1-(\x)^(-1)))+2})
 -- plot [domain=3:8,smooth] (1, \x)
 -- plot [domain=1:2,smooth] (\x, 8)
 -- cycle ;
   \filldraw[draw=white,fill=red!40] 
    plot [domain=0:8] (0.9, \x)
 -- plot [domain=8:0] (1, \x)
 -- cycle;
\draw [->] (0.9,0) -- (2,0);
\draw [->] (0.9,0) -- (0.9,8);
\draw [-] (1,0) -- (1,8);
\draw [dashed] plot [domain=1:2,smooth] (\x, {(2-(\x)^(-1)+(10/3)*sqrt(1-(\x)^(-1)))+1})node [right, sloped] { $h^*$};;
\draw plot [domain=1:2,smooth] (\x, {(2-(\x)^(-1)+(10/3)*sqrt(1-(\x)^(-1)))})node [right, sloped] { $h^-$};
\draw plot [domain=1:2,smooth] (\x, {(2-(\x)^(-1)+(10/3)*sqrt(1-(\x)^(-1)))+2})node [right, sloped] { $h^+$};
\draw plot [domain=1:2,smooth] (\x, {(2-(\x)^(-1)+2*sqrt(1-(\x)^(-1)))}) node [right, sloped] { $h_1$};
\draw (1.5,0) node [below] {encounter rate ($E$)};
\draw (0.9,4) node [above, rotate=90] {handling time ($h$)};
\draw (1,0) node [below] {1};
\draw (1.5,1.5) node [above]{ {\LARGE Uniform extinction}};
\draw (1.5,2.6) node [above,rotate=0]{ \LARGE Extinction};
\draw (1.5,6) node [above]{ \LARGE Invasion};
\draw (0.95,4) node [rotate=90]  {\LARGE Uniform invasion};
\draw (1.5,4.5) node [above, rotate=0]{{\large Transition Zone  (I)}};
\draw (1.5,3.4) node [above, rotate=0]{{\large Transition Zone (II)}};
\end{tikzpicture}
\caption{\label{fig2} {Description of the dynamic of the PDE system \eqref{sys2} in the $E - H$ plan.
If $E<1$ there is always invasion. If $E>1$ 
there is a uniform extinction for $h<h_1(E)$, extinction for $h_1(E)<h<h^-(E)$ and 
 invasion for $h^+(E,\alpha)<h$. 
 The zone between  $h^-$ and $h^+$ is called the transition zone. 
This transition zone is splitted into two subzones: Zone I and  Zone II, separated by the $h^*$ curve.  
In these two zones, both extinction or invasion of prey may occur due to various spatial phenomena.}
}
\end{figure}
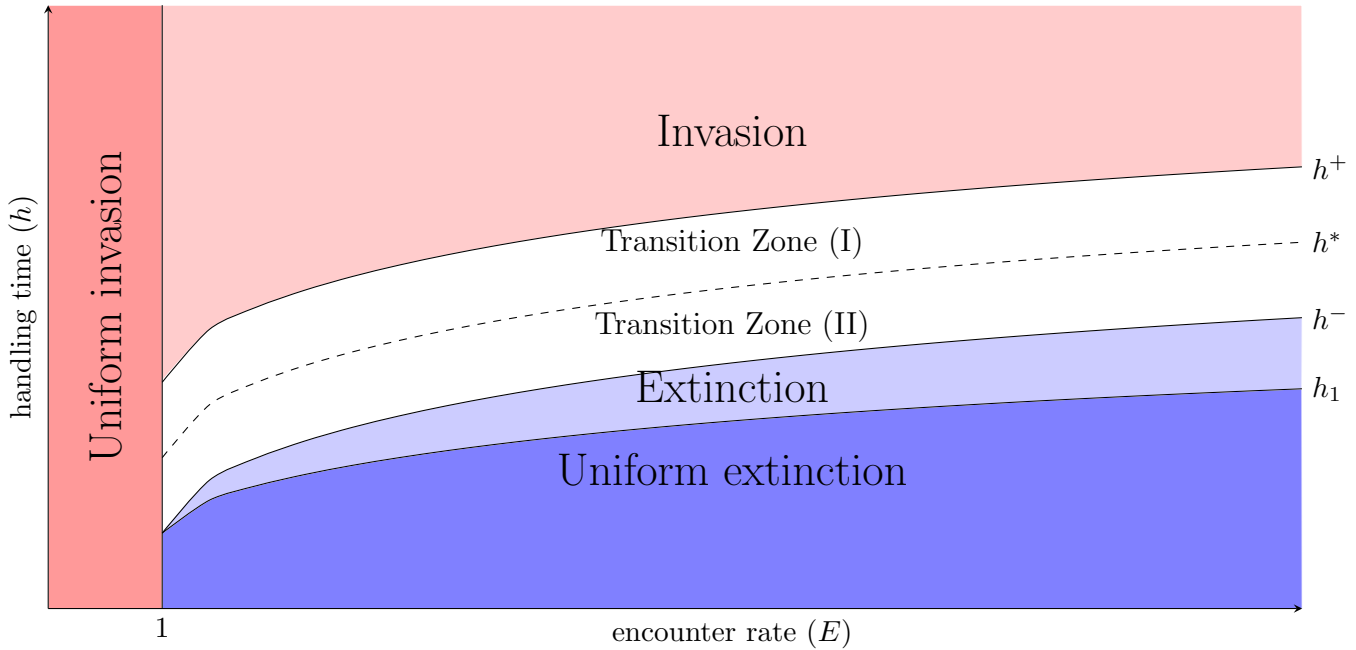
	\newpage

\section{Numerical study of the transition zone }\label{simu}
\subsection{Influence of  $\alpha$.}
The mathematical results above demonstrate the influence of the parameters  $E$ and $h$, and, 
indirectly, that of the conversion rate $\alpha$, on the long-term behavior of the system.
%
%
More precisely, when $E>1$, prey extinction or invasion may occur, depending on the value of $h$.
Indeed, we can define two values  $h^-=h^-(E)<h^+=h^+(E,\alpha)$  (see theorems \ref{thcontrol} and \ref{thnocontrol}).
Extinction occurs if $h<h^{-}$ and invasion occurs if $h>h^+$. When $h\in(h^-,h^+)$ 
we observe richer dynamics, which may depend on other factors. We refer to this zone as the {\bf transition zone}.
Note that, as $h^-$ is not dependent on $\alpha$ and $h^+$ is an increasing function of  $\alpha$ (proposition \ref{prophp}),
the size of this transition zone increases with increasing $\alpha$.\\
\begin{figure}[ht!]

	\centering
		\includegraphics[width=1.1\textwidth]{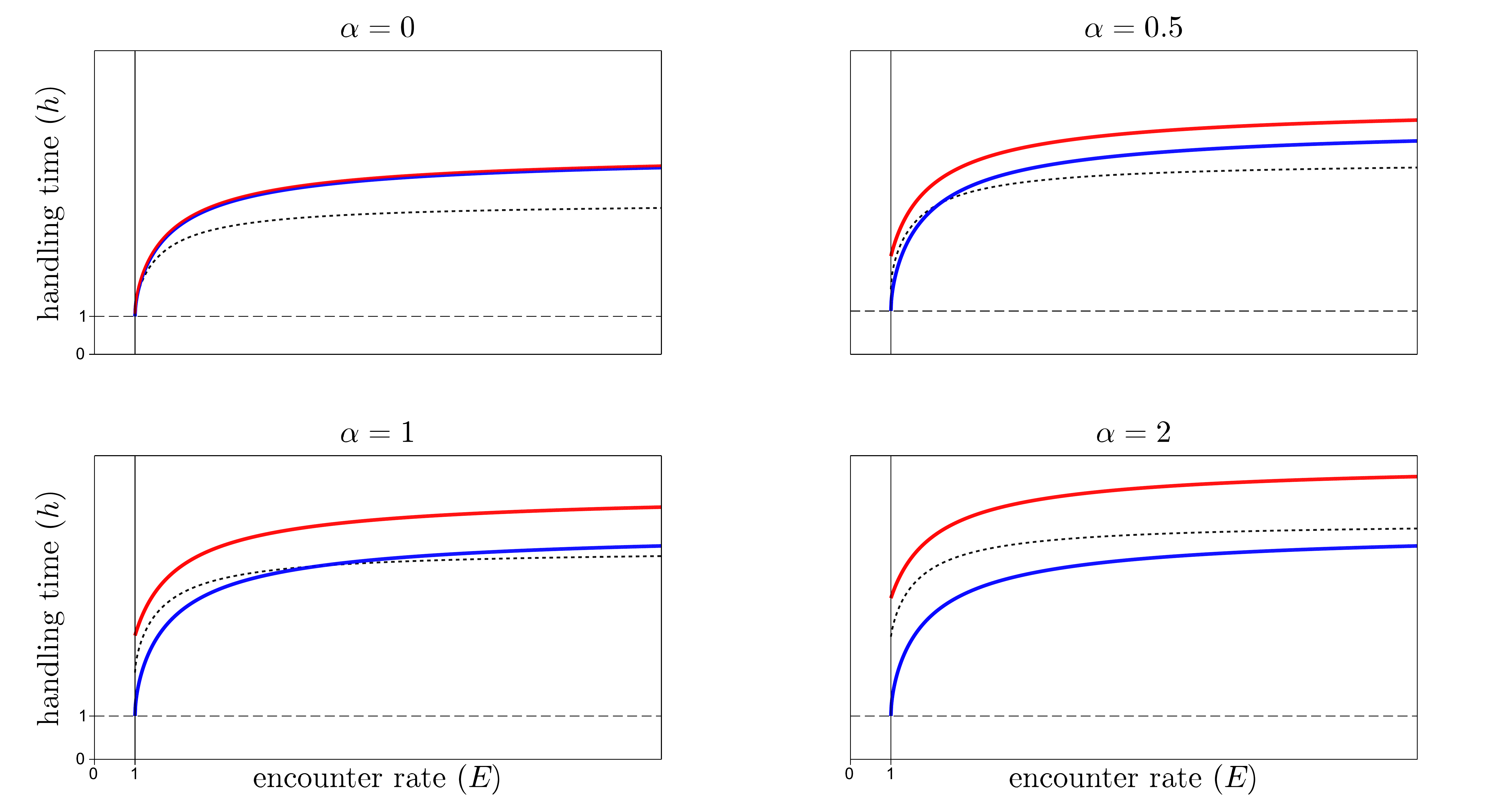}

\caption{\label{figUZ} Computation of $h^*(E,\alpha)$, $h^-(E)$ and $h^+(E,\alpha)$ for four values of $\alpha$. 
The so-called transition zone lies between the red curve $h^+$ and the blue curve $h^-$. 
The transition zone increases with  increasing $\alpha$.  
This transition zone can be split into two subzones separated by $h^*$ (black line).
}

\end{figure}

A first clue to the possible dynamics in the transition zone is provided by an understanding of the dynamics of the ODE system  \eqref{sysODE} 
described in the theorem \ref{thODE}. 
The dynamic of \eqref{sysODE} is essentially dependent  
\footnote{The dynamics generally also depends on a quantity $h^{**}$, defined in the theorem \ref{thODE},
slightly greater than $h^*$ that is not taken into account 
here for the sake of simplicity. }
on the position of $h$ relative to $h^*=h^*(E,\alpha)$. 
When $h<h^*$,  there is no  positive stationary solution, whereas for $h>h^*$
there are two positive stationary solutions, one of which, the larger of the two, is (nearly always) stable.  \\

The position of  $h^*$ relative to $h^-$ and $h^+$ provides a first description of the transition zone. By virtue of proposition \ref{prophet}, 
one gets the following.
We always have $h^*<h^+$ but the position of  $h^*$ relative to  $h^-$ is dependent on $\alpha$.
On the one hand, from the facts that  $h^*(1,\alpha)>h^-(1)$ for $\alpha>0$ and
 $h^*(E,0)= h_1(E)<h^{-}(E)$  for $E>1$, we deduce that $h^->h^*$ for large enough values of $E$ and small enough values of $\alpha$.
 On the other hand, $h^*$ is an increasing function of $\alpha$ tending to $+\infty$. We obtain that $h^*(E,\alpha)>h^-(E)$ 
for  large values of $\alpha$ and any $E>1$.

{\begin{remark}
When $h^->h^*$, which may occur for  sufficiently small values of $\alpha$, 
we see that taking space into account automatically increases the potential of extinction of preys. 
\end{remark}
}

The transition zone can thus be separated into two subzones: one in which  $h<h^*$ (Zone II) and one in which $h>h^*$ (Zone I).
Figure \ref{figUZ} sums up this discussion.
As we will see below, both extinction and invasion are possible in each of these zones, but the phenomena at work differ considerably, 
according to whether $h<h^*$ or $h>h^*$. These phenomena are studied in more detail below, using a numerical approach.

\begin{figure}[ht!]
\includegraphics[width=1.0\textwidth]{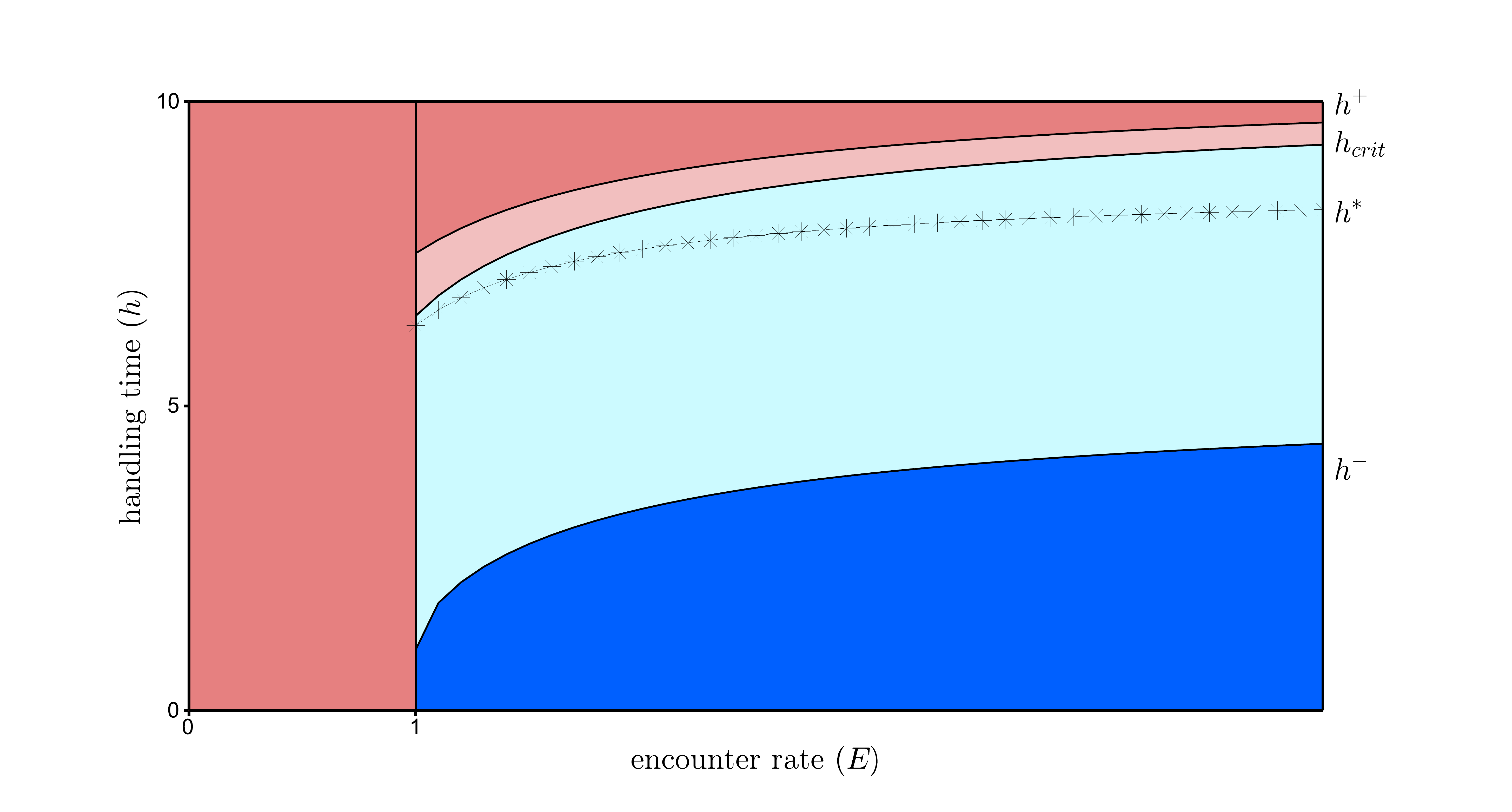}
\caption{\label{fig:hcrit} Numerical computation of $h_{crit}$ in the  $E-h$ space for the fixed values of $\alpha=4$, $d=1$ and $r=1$. 
The transition zone $h\in(h^-,h^+)$ is split into two subzones separated by $h_{crit}$. Extinction occurs below $h_{crit}$ and invasion occurs above.}
\end{figure}

\subsection{Extinction or invasion: influence of $r$ and $d$.}
Our numerical analysis shows that both invasion and extinction are possible in the transition zone. 
When space is taken into account we see that $h^*$ does not separate the zone of invasion from that of extinction. 
These two zones are, indeed, separated by a new critical value of the handling time denoted $h_{crit}\in(h^-,h^+)$, 
 which is dependent on $E$ and $\alpha$, of course, but also on the relative rates of growth ($r$) and diffusion ($d$) of the predator population:
$$h_{crit}=h_{crit}(E,\alpha,r,d).$$

As expected, when $h>h_{crit}$, prey invasion is observed, whereas extinction is observed when $h<h_{crit}$. 
Thus, higher values of $h_{crit}$ are associated with more effective predation and thus with less effective invasion by the prey.

Figure  \ref{fig:hcrit} completes the theoretical scheme represented in figure \ref{fig2}, by presenting an example of the  
curve $(E,h_{crit})$ for a particular selection of values for the parameters 
 $\alpha$, $r$ and $d$  in the $E-h$ plan.

Like $h^*$ and $h^+$, $h_{crit}$ increases with both $E$ and $\alpha$. 
This naturally translates into the fact that,  higher values of $E$ increases the chance of meeting between predators and prey and
that at higher $\alpha$ values, the predator is able to make greater use of the prey and can therefore eliminate it.

Given the multiple dependence of $h_{crit}$ on different parameters, figure \ref{fig:hcrit} can only represent a particular case,
chosen for its simplicity. 
Figure \ref{fighcrit} shows the relationships between  $h_{crit}$ and $d$ for fixed values of $E$ and $\alpha$  and for various values of $r$.
We see that $h_{crit}$ increases with $r$. This translates into the fact that for small values of $r$, the predators growth rate is small and their 
effectiveness reduced. Conversly, $h_{crit}$ (essentially) decreases  with increasing $d$. This is due to the fact that for large $d$, 
predators spread into a zone in which prey are not present, resulting in a weakening predation.
\begin{figure}[ht!]
\hspace{-1cm}\includegraphics[width=1.200\textwidth]{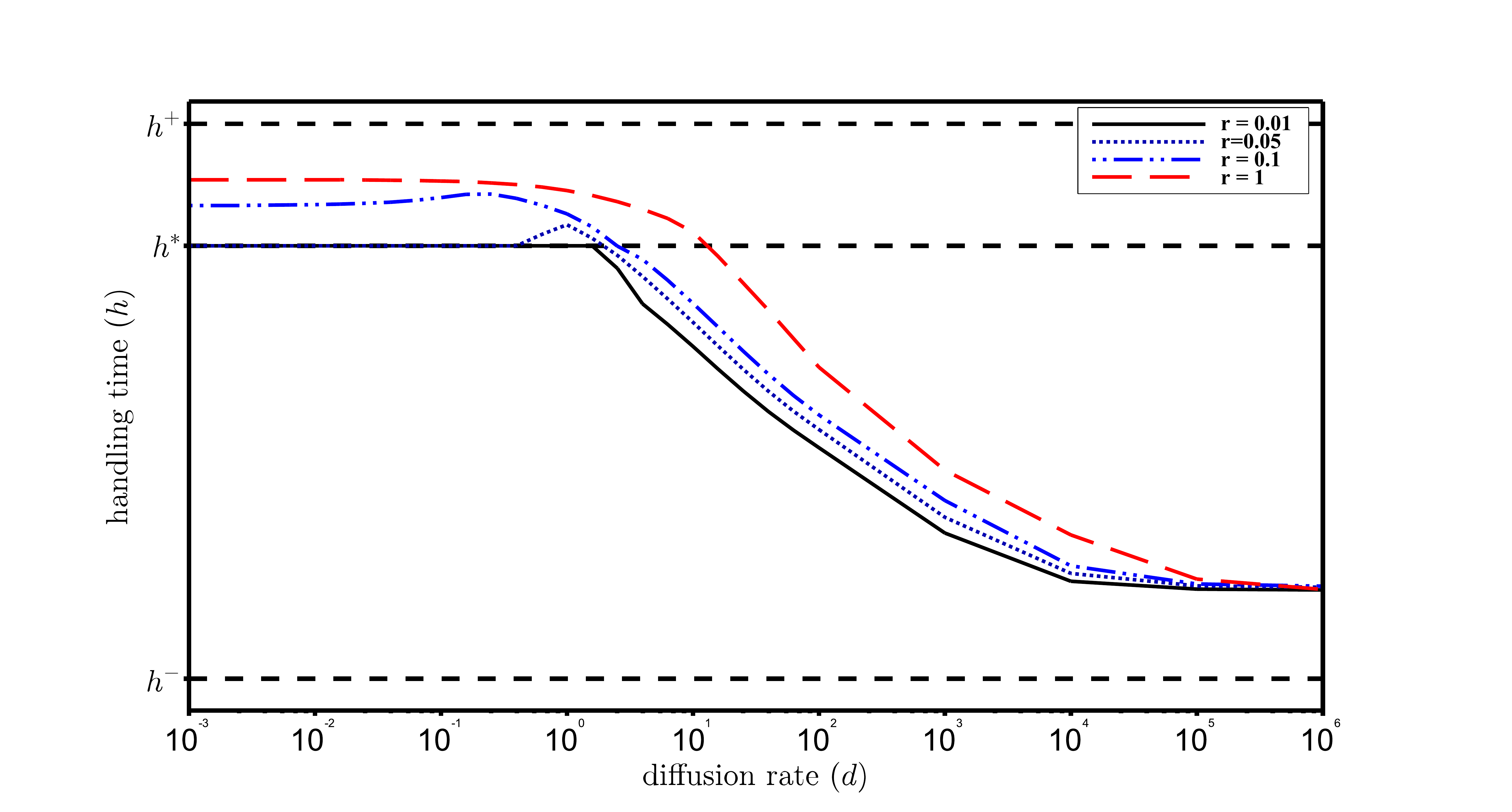}
\caption{\label{fighcrit} {Numerical computation of  $ h_{crit}$ with respect to $d$ for four values of $r$ with $E=2$ and $\alpha=4$.
For a given value of the parameters, there is invasion of prey if $h>h_{crit}$ and extinction if $h<h_{crit}$. Therefore, the larger $h_{crit}$ is, 
the greater is the potential for extinction of prey.
We see that $h_{crit}$ is increasing in  $r$ and (essentially) decreasing in $d$. 
Thus, an increase of $d$ or a decrease of $r$  decrease the impact of the predation on invasive prey.}} 
\end{figure}
\begin{remark}
 For small values of $r$ and intermediate values of $d$,  predators may increase their effectiveness by increasing $d$.
 In that case, 
 the predator growth rate being  small, predators density remains high for a long time even if prey are absent. 
 Now, if $d$ is large enough  but not large, predators may spread into a zone in which 
 prey are not present and remain there at a high density and long enough to stop the prey to invade.
 This phenomenon may enable predators to form a barrier to prey's
 movement, preventing thereby prey propagation. For too  large values of $d$, the loss in predators effectiveness  due to  movement is too strong 
 and the above phenomenon does not hold any more.
 This  explains why, for small values of $r$, $h_{crit}$ first increases  and then decreases with increasing $d$. 
\end{remark}

\subsection{ Dynamics of the system in the transition zone}
We know that extinction occurs when $h<h_{crit}$, whereas invasion occurs when $h>h_{crit}$. 
It should be borne in mind that the existence or absence of non-trivial solutions that are homogeneous over space are dependent on 
the position of $h$ with respect to $h^*$. 
Consequently, the processes at work during extinction or invasion are highly dependent on these position.

We will now describe the different dynamics occurring in the transition zone, summarized in table \ref{bilan}. 
See section \ref{mains} for a precise definition of the various quantities described here.

\begin{table}
	\centering
\begin{tabular}{|c|c|c|}
 \hline&&\\
	& $h<h_{crit}$ : Extinction 	 & $h>h_{crit}$ : Invasion\\
	&  & \\
 \hline
 &&\\
 $h<h^*$: Zone II& 	PULSE ($r\ll 1$)& TURING ($d\gg 1$)  \\
& {\small $h= 5.35$; $d=1$;  $r=0.01$}&     {\small$h= 5.35$; $d=100$;  $r=1$}     \\      
 &\includegraphics[width=0.4\textwidth]{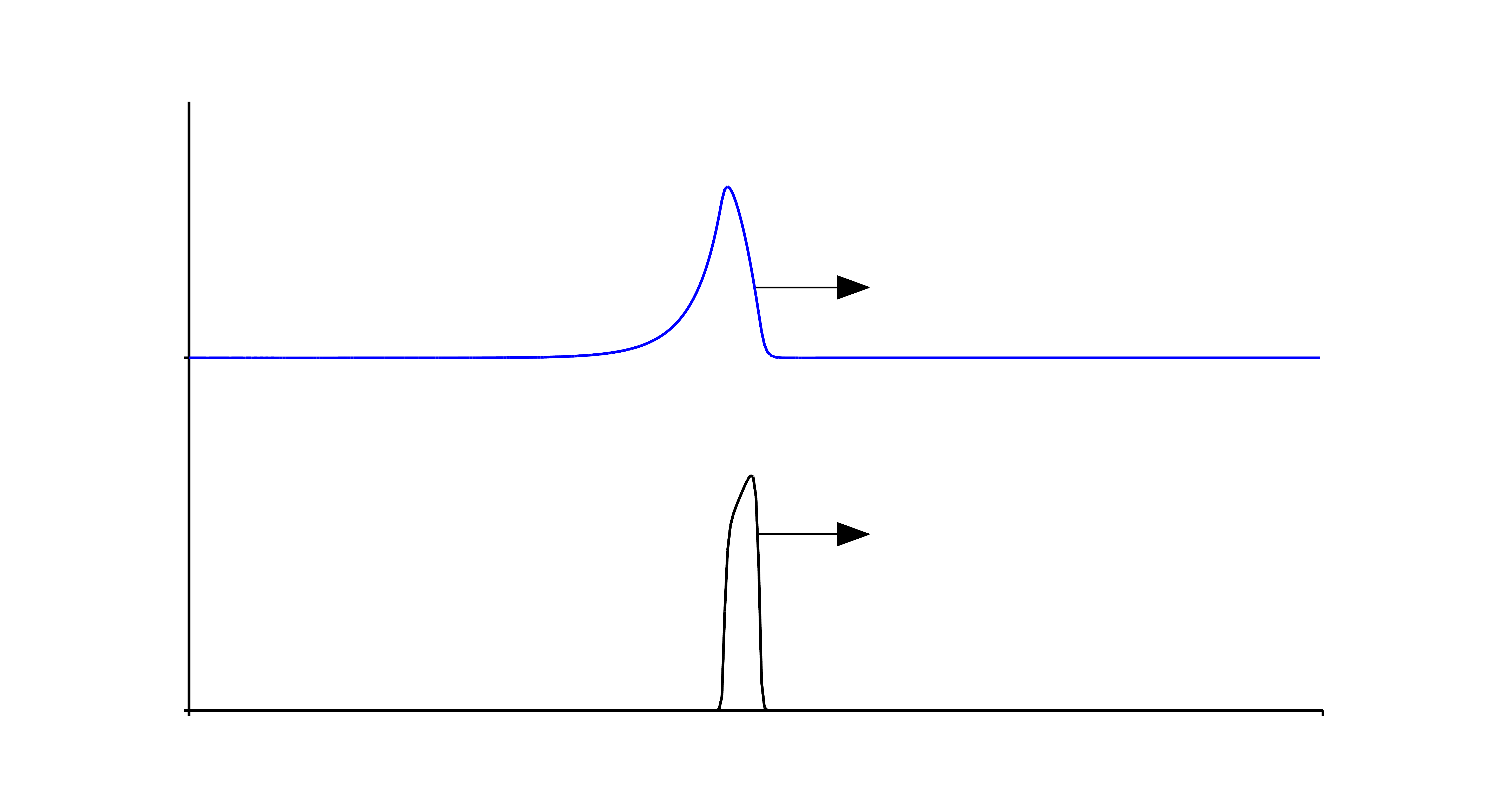}&\includegraphics[width=0.4\textwidth]{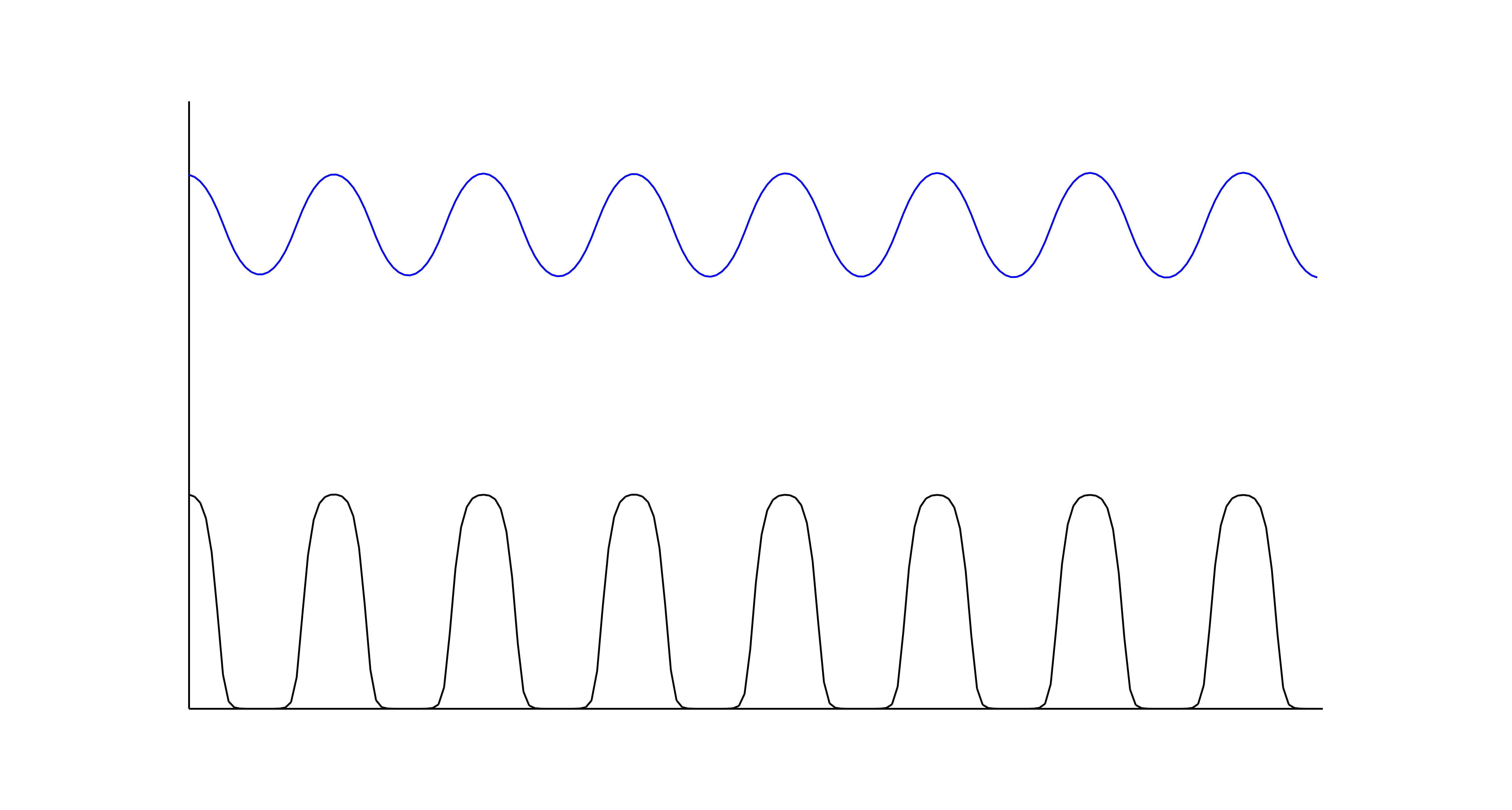}\\                         
 \hline
 &&\\
 $h>h^*$ : Zone I&  ETW		& ITW\\
& {\small$h= 5.6$; $d=1$;  $r=1$}&      {\small$h= 6$; $d=1$;  $r=0.01$}     \\
 &\includegraphics[width=0.4\textwidth]{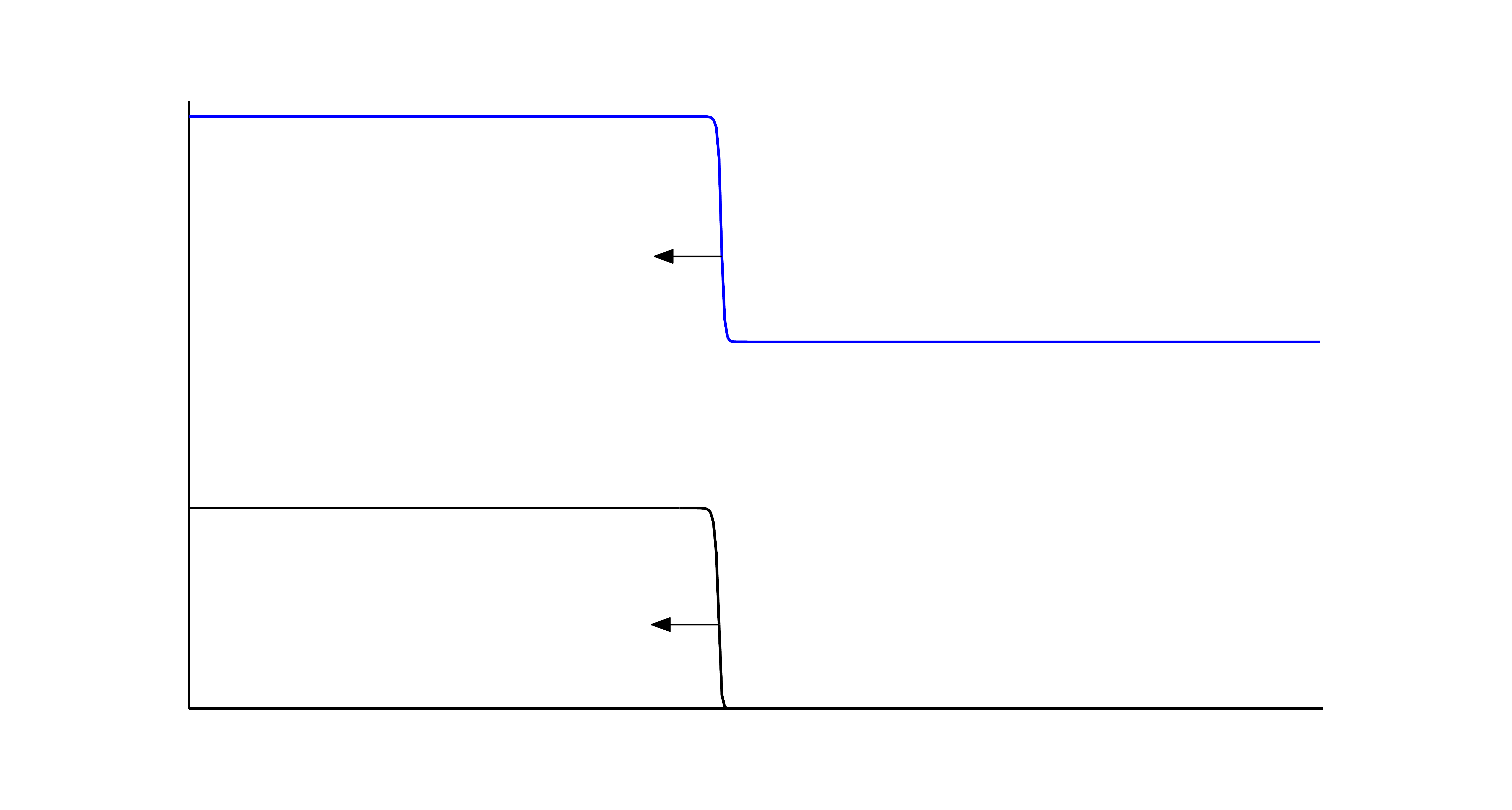}&\includegraphics[width=0.4\textwidth]{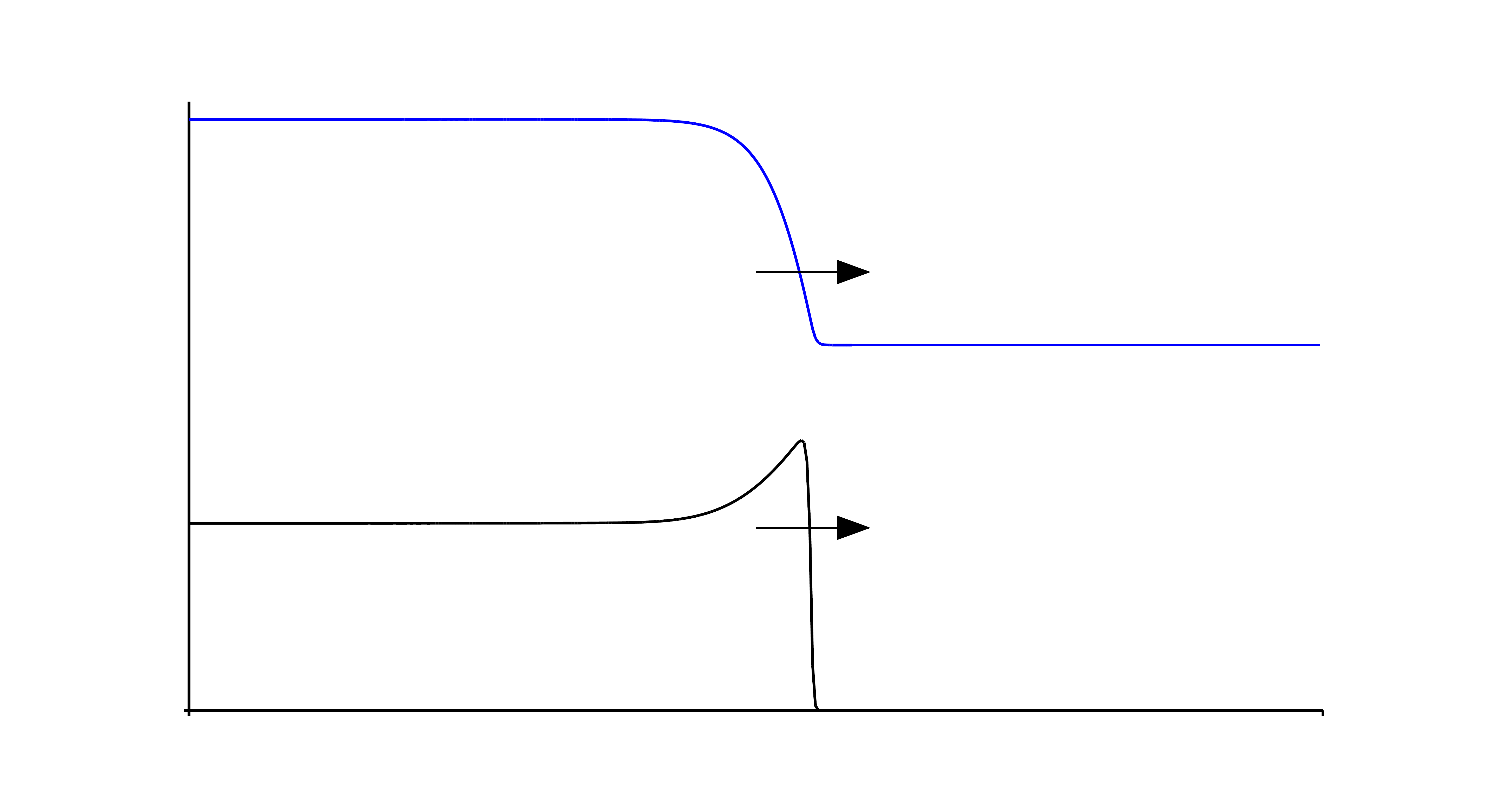}\\
 \hline
\end{tabular}\caption{\label{bilan} {Summary of the four different situations involving space in the transition zone.  Here,  $E=2$ and $\alpha=4$ which yields  $h^*\approx 5.4$.
The X axis represents space and the Y axis the concentration of species. 
The blue curve (top of each graph) represents the concentration of predators in space while the black curve is the concentration of prey.
Extinction and Invasion Traveling Waves are abbreviated ETW and ITW respectively.}}
\end{table}
\paragraph{A] Invasion ($h>h_{crit}$)}
\begin{itemize}
\item {\bf Turing instabilities : $h<h^*$ and $d\gg 1$.}
As $d$ increases,  predators spread out, moving into areas from which the prey is absent,
leading to a decrease in the size of the predator population. 
This phenomenon leads to a decrease in predator density throughout the space occupied by the predator,
allowing the prey to survive in certain zones. We thus obtain a periodic distribution in space and a constant distribution over 
time of the densities of the prey and predator. 
Mathematically, this phenomenon is described by a Turing bifurcation.

\item {\bf Invasion traveling waves  (ITW) : $h>h^*$.} 
The invasion is described simply by an invasion traveling wave: a wave of propagation linking the two stable solutions $(u^*,v^*)$ and $(0,1)$ 
in the direction of the positive solution $(u^*,v^*)$. When  $r$ is large this traveling wave is monotone.
By contrast, when $r\ll 1$  it displays a rich dynamics. When $r\ll 1$ and $d$ is not too large,
we observe that $h_{crit}$ is approximately equal to $h^*$ (see figure \ref{fighcrit}).
This indicates that there is an invasion traveling wave if a positive solution exists. 
In this case, ahead of the front, $v=1$ and, since $r\ll 1$, the predator population increases very slowly. 
Besides, as $h>h^-$, the prey invades the space when the predator is at concentration  $1$.
This leads to  front advancing.
Behind the front, the predator has had sufficient time to increase the size of its population and, therefore, 
to decrease the size of the prey population.
As $h>h^*$,  
the population of the prey decreases towards the positive solution $u^*$ and we observe a non-monotonous invasive traveling wave. 
\end{itemize}

\paragraph{B] Extinction ($h<h_{crit}$)}
\begin{itemize}
\item{\bf Pulse : $h\in [h^-,h^*]$ and $r\ll 1$.} Since $r\ll 1$, 
the  front of the wave  is similar to the ITW described above. However, as $h<h^*$, here is no homogeneous positive solution  $u^*$. 
The prey population therefore decreases to zero behind the front, whereas it continues to advance in ahead of the front. We thus obtain a pulse.
\item {\bf Extinction traveling wave (ETW) : $h>h^*$.} This corresponds to the simplest case described above.
We observe a propagation wave linking the two stable solutions $(u^*,v^*)$ and $(0,1)$ in the direction of the control solution $(0,1)$.
\end{itemize}


Finally,  figure \ref{fig:cartographie} 
presents the map in the $E-h$ plane for fixed values of $\alpha$, $d$ and $r$. It furthermore  specifies the possible dynamics  in each zone.

\begin{figure}[ht]
\includegraphics[width=1\textwidth]{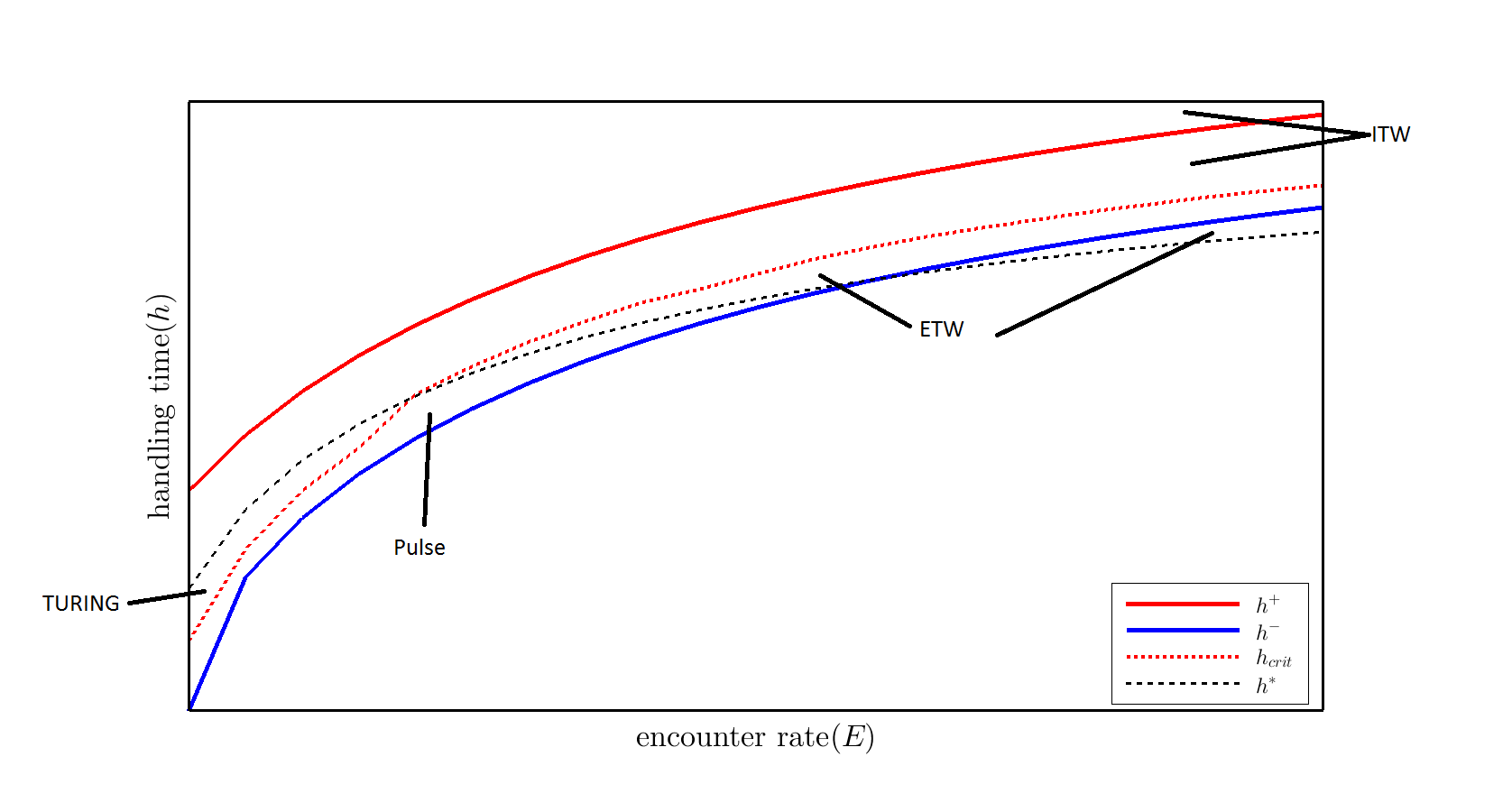}
\caption{\label{fig:cartographie}
{Different spatial dynamics in the transition zone. To ensure that all possible situations are represented, 
we choose  $\alpha=0.5$, $r=0.01$ and $d=10$. 
Note that traveling waves of extinction (ETW) and traveling waves of invasion (ITW) may be obtained outside the transition zone.}}
\end{figure}
\section{Conclusion and discussion}\label{ccl}
\subsection{Summary of the results}
We have shown that invasion occurs if $E<1$. 
If  predators do not encounter their prey they cannot control them.
%
If $E>1$, then extinction or invasion can occur, depending on the parameters $h$, $E$, $\alpha$ and $r$.
Uniform extinction occurs for $h<h_1(E)$ and extinction occurs for $h_1(E)<h<h^-(E)$.
Invasion occurs for $h^+(E,\alpha)<h$.
Thus, if $h$ increases, we move from a zone of extinction without a consideration of space to a zone of extinction requiring a consideration 
of spatial aspects and then to a zone of invasion. 
%
For intermediate values of  $E$, 
the zones of extinction increase with increasing $E$ resulting in a higher potential of extinction, as $h_1,h^*, h^-$  are
increasing functions of $E$.  
When $E$ is large, the zones of control do not depend  on $E$ any more because  $h_1,h^-, h^+$ have finite limits when  $E \rightarrow +\infty$.
Thus, $E$ can only play a role in prey control if it takes intermediate values and if $h$ is not too large. 
In summary, for low values of $E$ ($E<1$) or high values of $E$ or $h$, 
the outcome of the interaction (extinction or invasion) is independent of $E$.\\

There is furthermore a transition zone splitted in two subzones, 
with various spatio-temporal phenomena and wherein both extinction and invasion can occur. 
The size of this transition zone greatly increases when the conversion rate $\alpha$ increases.
Depending on the relative positions of these two zones with regard to the zones of extinction and of invasion,
four spatial dynamics were identified: extinction and invasion traveling waves, extinction pulse waves and heterogeneous stationary 
positive solutions of the Turing type. 

\subsection{Biological interpretation of the main results}
We have shown that an increase in $E$ increases the potential of extinction while an increase in $h$ increases the potential of invasion. 
This translates the fact that a highly effective predator does have a high encounter rate and a small handling time.
Furthermore, since $h^+$ is an increasing function of $\alpha$, an increase in $\alpha$ decreases the potential of invasion and increases the size of 
the transition zone, which in turn increases the potential for  the system to have  complex dynamics. 
Finally, an increase of the diffusion rate $d$  and a decrease of the amplitude of the predators growth rate $r$  both 
increase the potential of invasion of prey. 
Thus, a generalist predator loses its effectiveness to exterminate invasive prey if it diffuses  too fast or if it has a too slow dynamics.

The above results are stated in term of adimensionalized parameters (see section \ref{secadim}).
By choosing the appropriate spatio-temporal variables, we may define $D_u=r_1=1$. Thus, the biological interpretations of $d$ and $r$ are accurate.
Conversely, the definitions of the searching efficiency, $E=E K_2$, 
the handling time $h= h \frac{K_1}{K_2}$ and the conversion rate $\alpha=\frac{\gamma}{r}$ complicate the biological 
interpretation of these three parameters. Thus, in addition to the above discussion about the influence of $E$ and $h$,
we now discuss our results in terms of the  other biological variables:  $K_1$, $K_2$, $\gamma$ and $r$.
The parameter $h$ being increasing in the carrying capacity $K_1$ of prey, prey with high carrying capacity show a high risk of being invasive.
Conversly, $h$ and $E$ are respectively decreasing and increasing in the carrying capacity of predators $K_2$. Thus, predators with high carrying capacity
have a high potential to control prey invasion. 
Otherwise  $\gamma<1$ means that predator growth is mostly due to alternative prey while  $\gamma>1$  implies that 
predator growth is due to consumption of the focal invasive prey. 
Finally, an increase in $\gamma$ and a decrease in the amplitude of the predator growth rate $r$   yield an increase of $\alpha$. 
Therefore,  predators with
a preference for  the invasive prey or predators with a slow dynamics might display a complex dynamics. In particular, the likelihood of the system to 
exhibit a pulse wave is then important.

\subsection{The consideration of space often, but not always, increases the potential for control of pest invasion}

The model analyzed here was studied without taking space into account, except for a numerical exploration in the discussion, 
in an article by \cite{Magal}.
As explained in the introduction, models of identical structure have been proposed independently by \cite{Fagan2002} and 
\cite{Chakraborty}. 
We will now discuss our results in the context of these previous studies. 
Adding a spatial component to predator-prey systems makes any prediction about the controllability of the system difficult, 
as it then depends on the values of several parameters. The comparison between situations with and without the consideration of space is 
epitomized by the distinction between $h^*$, separating parameter regions of mono- and bistability in the ODE system, and $h_{crit}$, separating parameter 
regions of invasion and extinction in the PDE system. 
We will now  focus on the case of $E>1$, as values of $E<1$ do not promote control,  predators encountering  prey too infrequently.

If space is not taken into account, control occurs if $h<h^*$, as $0$ is a global attractor. 
This is still true in situations in which space is taken into account, if  $h<h_1$ where $h_1$ is smaller than $h^*$.
When $h$ is between $ h_1$ and $h^*$, $0$ is only a local attractor, so it is not possible to state that control is always attained.
In this respect, adding  consideration of space decreases the potential for control. 
Furthermore, when space is not taken into account, there is either extinction or 
invasion when $h>h^*(E,\alpha)$, {depending on initial conditions}.
Incorporating  consideration of space changes the region where invasion occurs, for any values of the other parameters and for appropriate initial conditions,  into   $h>h^+(E,\alpha)$, with $h^+>h^*$. 
Thus, the consideration of space reduces the size of the  zone wherein the invasion is certain and is detrimental to the invading prey. 
Finally, the relative levels of predator and prey diffusion also determine the potential for control. 
Our model shows that control is increased by predators being less mobile than prey. If predator mobility levels are too high,
the predators become to thinly spread on the ground. For similar reasons, too high a level of prey mobility leaves the prey vulnerable to predators. 
This is entirely consistent with the experimental findings of \cite{Fagan2002}. In conclusion, taking space into account can lead to an increase or
a decrease in the controllability of invading prey by predators; 
the addition of space to the model has no generic implication for considerations of predator-prey dynamics 
{(see also \cite{LamNi,Braverman2015})}.

\subsection{How can generalist predators  reverse invasion by pest?}

The originality of this study lies in its consideration of a generalist predator in a spatial context. When studying generalist predators, 
it is common practice to assume that the functional response is of type III, due to switching between prey species (\cite{Ebarch2014, Leuven2007, Leuven2013, Petro2013}). 
However, this approach is not mandatory, and other works (\cite{Basnet,Krivan2006,Hoyle}) 
have considered a type II functional response. Altering our model to include a type III functional response would be very costly 
{in terms of understanding}, 
because such responses lead to a  loss of bistability. Its derivative would be null without prey, so some of our demonstration would fail 
and the analytical complexity would be greatly increased. However, traveling waves for {\it specialist} predator with type III functional response 
are known to exist (\cite{LiWu2006}) which indicates that our result may be extended to this case.

The complexity of analytical studies of spatial predator-prey interactions lies in the reaction terms being of alternative signs in the equations, 
making the study of the {\it systems} of equations essential (\cite{Dunbar2,Huang,HuangWeng}). Other interactions, such as competition of two species (all negative) and symbiosis (all positive), 
are simpler, as their studies are similar to the study of a single equation (\cite{volperts,Alzahrani2012}). This accounts for the slow scientific progress in this otherwise highly 
relevant topic. However, several major results have been obtained in recent decades, including those of the fundamental work of  \cite{owenlewis}.
The finding of Owen and Lewis  that predators can slow, stop, and even reverse invasion by their prey was based on the bistability of the 
prey-only dynamics of systems consisting of specialist predators attacking prey populations displaying Allee effects. By contrast, our work shows 
that the ability of generalist predators to control prey populations with logistic growth lies in the bistable dynamics of the {\it coupled} system. 
We also observe pseudo-Allee effects in our system, but their physics is quite different. An analysis of the ODE system identified parameter
regions of monostable (extinction) and bistable (extinction or invasion) dynamics, but analysis of the associated PDE was able to distinguish different 
and additional regions of invasion and extinction. As a consequence, prey control was predicted to be possible when space was considered in additional 
situations other than those identified without considering space. The reverse situation was also possible. None of these considerations apply to spatial
predator-prey systems with specialist natural enemies.

\section{Proofs}\label{Proofs}

\subsection{Proof of theorem \eqref{thODE}}\label{proofode}
\noindent Let $E>1$ and $\alpha\geq 0$ be fixed. For any $h\geq 0$, the system \eqref{sysODE} can be rewriten as
 \begin{equation}\label{sysODEproof}
  \left\{\begin{array}{l}
   \frac{d}{dt} u= \Theta_h(u) (f_h(u)-v)\\
   \frac{d}{dt} v= r v(g_h(u)-v)
  \end{array}\right.
 \end{equation}
where $\Theta_h(u)=\frac{E u}{1+Ehu}$, $f_h(u)=\frac{1}{E}(1-u)(1+Ehu)$ and $g_h(u)=1+\alpha\frac{Eu}{1+Ehu}$.\\

\noindent{\it (i) Proof of the existence.}
 Define $H(h,u)=f_h(u)-g_h(u)$. For a given $h\geq0$,  a couple $(u,v)$ is a positive  stationary solution of \eqref{sysODEproof} if and only if $v=f_h(u)$ and
 \begin{equation}\label{solODE}
 u\in]0,1[ \text{ is a solution of }H(h,u)=0.
\end{equation}
 Now, fix $u\in]0,1[$. Since $\partial_h H(h,u)=u(1-u)+\alpha \left(\frac{Eu}{1+Ehu}\right)^2>0$, one sees that the map $h\mapsto H(h,u)$ is increasing. 
 From $E>1$, we get $H(0,u)<0$ and from $u\in]0,1[$  we get $\lim\limits_{h\to +\infty} H(h,u)=+\infty.$
The map $h\to H(h,u)$ being continuous, this implies that for any $u\in]0,1[$, there exists a unique $h(u)>0$ such that 
$\begin{cases}
H(h,u)<0\text{ if }h<h(u),\\ 
H(h,u)=0\text{ if }h=h(u),\\
H(h,u)>0\text{ if }h>h(u).\\
\end{cases}$
\begin{figure}[h]
\centering
\begin{tikzpicture}
\newcommand\E{2};
\newcommand\al{2};
\begin{scope} [x=5cm,y=0.2cm,>=stealth]
\newcommand\ymax{50};
\clip(-0.2,-5) rectangle (1.5,\ymax+1);
\draw[->] (-0.01,0)--(1.1,0) ;
\draw[->] (0,-0.5)--(0,\ymax);
\draw[-,dashed] (0,13.2)--(1,13.2);
\draw[-,dashed] (0.39,13.2)--(0.39,0) node[below]{$u_{crit}$};
\draw[-,dashed] (1,0)--(1,\ymax);
 \filldraw[fill=gray!20,line width=2pt] plot[domain=0.03:0.99,samples = 1000] (1-\x,{((\x*(1-\x))^(-1))*(0.5*(1+sqrt(1+4*\al \x*(1-\x))))-(\E*\x)^(-1)});
\draw (1.1,0) node[below]{u};
\draw (0,\ymax) node[left]{h};
\draw (0,0) node[below]{0};
\draw (1,0) node[below]{1};
\draw (0,0) node[left]{0};
\draw (0,13.2) node[left]{$h^*$};
\draw (0.15,35) rectangle (0.75,45);
\draw (0.45,40) node{$f_h(u)>g_h(u)$};
\filldraw[color=white] (0.15,02) rectangle (0.75,12);
\draw (0.15,02) rectangle (0.75,12);
\draw (0.45,7) node{$f_h(u)<g_h(u)$};
\end{scope}
\begin{scope}[x=3cm,y=1.5cm,>=stealth,xshift=7cm,yshift=7.5cm]
\newcommand\h{5};
\draw (-0.2,-0.3) rectangle (1.4,2);
\draw (0.6,2) node[below]{$h>h^*$};
\draw (0.3,1.7) rectangle (0.8,2);
\draw [->] (0,0) -- (1.2,0);
\draw [->] (0,0) -- (0,1.5);
\draw  plot[domain=0:1,smooth] (\x, {((\E)^(-1))*(1-\x)*(1+\E*\h*\x)});
\draw plot [domain=0:1.2,smooth] (\x, {1+(\al*\E*\x) / (1+\E*\h*\x)}) node [below, sloped] { $g_h(u)$};
\draw (1,0) node [below] {\footnotesize 1};
\draw (0,0) node [below] {\footnotesize 0};
\draw (0,0) node [left] {\footnotesize 0};
\draw (0,1) node [left] {\footnotesize 1};
\draw (0,1/2) node [left] {\footnotesize $\frac{1}{E}$};
\draw (1.2,0) node [below] {\footnotesize $u$};
\draw (0,1.5) node [left] {\footnotesize $v$};
\draw (1.1,0.2) node {\footnotesize  $f_h(u)$};
\end{scope}
\begin{scope}[x=3cm,y=1.5cm,>=stealth,xshift=7cm,yshift=4cm]
\newcommand\h{4.36};
\draw (-0.2,-0.3) rectangle (1.4,2);
\draw (0.6,2) node[below]{$h=h^*$};
\draw (0.3,1.7) rectangle (0.8,2);
\draw [->] (0,0) -- (1.2,0);
\draw [->] (0,0) -- (0,1.5);
\draw  plot[domain=0:1,smooth] (\x, {((\E)^(-1))*(1-\x)*(1+\E*\h*\x)});
\draw plot [domain=0:1.2,smooth] (\x, {1+(\al*\E*\x) / (1+\E*\h*\x)}) node [below, sloped] { $g_h(u)$};
\draw (1,0) node [below] {\footnotesize 1};
\draw (0,0) node [below] {\footnotesize 0};
\draw (0,0) node [left] {\footnotesize 0};
\draw (0,1) node [left] {\footnotesize 1};
\draw (0,1/2) node [left] {\footnotesize $\frac{1}{E}$};
\draw (1.2,0) node [below] {\footnotesize $u$};
\draw (0,1.5) node [left] {\footnotesize $v$};
\draw (1.1,0.2) node {\footnotesize  $f_h(u)$};
\end{scope}
\begin{scope}[x=3cm,y=1.5cm,>=stealth,xshift=7cm,yshift=0.5cm]
\newcommand\h{3};
\draw (-0.2,-0.3) rectangle (1.4,2);
\draw (0.6,2) node[below]{$h<h^*$};
\draw (0.3,1.7) rectangle (0.8,2);
\draw [->] (0,0) -- (1.2,0);
\draw [->] (0,0) -- (0,1.5);
\draw  plot[domain=0:1,smooth] (\x, {((\E)^(-1))*(1-\x)*(1+\E*\h*\x)});
\draw plot [domain=0:1.2,smooth] (\x, {1+(\al*\E*\x) / (1+\E*\h*\x)}) node [below, sloped] { $g_h(u)$};
\draw (1,0) node [below] {\footnotesize 1};
\draw (0,0) node [below] {\footnotesize 0};
\draw (0,0) node [left] {\footnotesize 0};
\draw (0,1) node [left] {\footnotesize 1};
\draw (0,1/2) node [left] {\footnotesize $\frac{1}{E}$};
\draw (1.2,0) node [below] {\footnotesize $u$};
\draw (0,1.5) node [left] {\footnotesize $v$};
\draw (1.1,0.2) node {\footnotesize  $f_h(u)$};
\end{scope}
\end{tikzpicture}
\caption{\label{fig:h(u)} The four figures are computed for $E=\alpha=2$, which gives $h^*\approx4.36$. 
The figure on the left represents the curve $u\mapsto h(u)$ (in bold). Above the curve $f_h(u)>g_h(u)$ 
while below the curve $f_h(u)<g_h(u)$. For a given $h$, the ordinate $u$ of a point of this curve verifies $f_h(u)=g_h(u)$ and  corresponds to the positive stationary solution  $(u,f_h(u))$ of \eqref{sysODEproof}.
The figures on the right represent the isoclines $v=f_h(u)$ and $v=g_h(u)$ 
for the three  fixed  values  $h=3$, $h=4.36$ and  $h=5$. A positive stationary solution of the system \eqref{sysODEproof} 
corresponds to an intersection of these two isoclines. 
The system \eqref{sysODEproof} admits two positive solutions for $h>h^*$, one (double) 
solution for the critical case $h=h^*$ and zero positive solution for $h<h^*$. }
\end{figure}
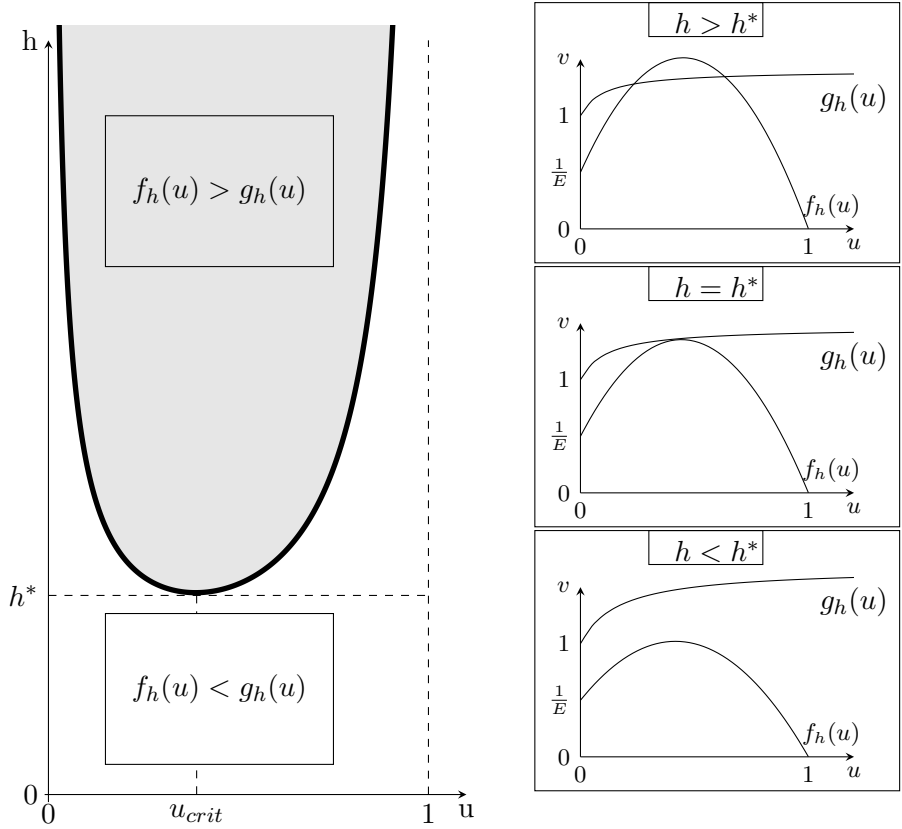

\noindent The smooth function $u\mapsto h(u)$ may be computed explicitly  
by noting  that for any $h\geq 0$, the equation \eqref{solODE} is equivalent to the algebraic equation 
\begin{equation}\label{solODEpol}
 u\in]0,1[ \text{ is a solution of }P_h(u)=0\end{equation}
wherein we have set
$$P_h(u)=(1-u)(1+Ehu)^2-E(1+Ehu+ E\alpha u).$$ 
\noindent This yields  the explicit formula
\begin{equation}\label{formulah}
h(u)=\frac{1}{Eu(1-u)} \left[ \frac{E}{2}\left(1+\sqrt{1+4\alpha u(1-u)}\right)+u-1\right].
\end{equation}
In particular
\begin{equation}\label{limitsh}\lim_{u\to 0^+} h(u)=\lim_{u\to 1^-} h(u)=+\infty.\end{equation}
 This implies that the minimum of $h(u)$ is obtained for some $u_{crit}\in]0,1[$. We define
\begin{equation}\label{defhstar}h^*=\inf_{u\in(0,1)} h(u)=\min_{u\in(0,1)} h(u)=h(u_{crit}).
\end{equation}
The definition of $h^*$  shows that if $h<h^*$, then \eqref{solODEpol} has zero solution.
This also implies, together with  the limits \eqref{limitsh} and the continuity of
$u\mapsto h(u)$,  that for any fixed  $h>h^*$ 
the equation  \eqref{solODEpol} admits at least two solutions\footnote{\label{minhstar}Remark that  $h^*>\frac{1}{E}$, because  $h\leq \frac{1}{E}$, implies that $f_h'<0$ on $(0,1)$ and \eqref{solODE} has no solution.} (see the figure \ref{fig:h(u)}).
In addition to this, for any fixed $h>0$, one has $\deg(P_h)=3$  and  $P_h$ always admits  a negative roots for $P(0)=1-E<0$ and $\lim\limits_{x\to -\infty}P_h(x)=+\infty$.  
This implies that \eqref{solODEpol} admits at most two solutions\footnote
	{				\label{footproofhstar}
				These arguments also show that $u\mapsto h(u)$ is decreasing on $(0,u^*)$ and increasing on $(u^*,1)$ ; for otherwise  it is possible
				to choose  $h>0$ such that there is at least four different $u\in(0,1)$ such that $h=h(u)$, which is equivalent to $P_h$ having at least 4 roots. See the figure \ref{fig:h(u)}.
	}.
In conclusion, \eqref{solODEpol} has exactly two positive solutions if $h>h^*$ and zero positive solution if $h<h^*$.  This ends the proof of $(i)$.\\

 \noindent{\it (ii) Proof of the stability}. Let $h>h^*$ be fixed. Let  $(u,v)$ be a positive stationary solution  of \eqref{sysODEproof}. Since $(u,v)$ verifies $f_h(u)=g_h(u)=v$, the Jacobian matrix at $(u,v)$ reads
 
$$J(u,v)=\left[\begin{array}{cc}
                    \Theta_h(u)f_h'(u)&-\Theta_h(u)\\ rvg_h'(u)&-rv
                   \end{array}\right]$$
hence
 $$\det\big(J(u,v)\big)=r\Theta_h(u)v\big(g_h'(u)-f_h'(u)\big).$$
From the proof of (i), we know that the system \eqref{sysODEproof} admits exactly two positive solutions denoted respectively as $(\widehat{u},\widehat{v})$ and $(u^*,v^*)$ with $\widehat{u}<u_{crit}<u^*$ and such that 
\begin{equation}\label{equalh}
h=h(\widehat{u})=h(u^*),
\end{equation}where $u\mapsto h(u)$ is given by \eqref{formulah}. In particular, $\widehat{u}$ and $u^*$ are the solution of 
\begin{equation}\label{H(u)}
H(h(u),u)=0.
\end{equation}
Differentiating the equation \eqref{H(u)} with respect to $u$ gives
$$\partial_u H(h(u),u)=-h'(u)\partial _h H(h(u),u).$$
Thus, using the known fact that $\partial _h H(h(u),u)>0$, the identity \eqref{equalh} and the footnote \ref{footproofhstar}, one gets $\partial_u H(h,\widehat{u})<0$ and $\partial_u H(h,u^*)>0$.
Since $\partial_u H(h,u)=-(g'_h(u)-f_h'(u))$, this shows that $\det\big(J(\widehat{u},\widehat{v})\big)<0$ and the instability of $(\widehat{u},\widehat{v})$ follows.\\

\noindent By contrast, one has $\det\big(J(u^*,v^*)\big)>0$ and it appears that the stability of $(u^*,v^*)$ is given by the sign of
\begin{equation}\label{trace}\text{tr}\big(J(u^*,v^*)\big)= \Theta_h(u^*)f_h'(u^*)-rv^*.\end{equation}
In order to highlight the dependence on $h$, for any $h>h^*$, we note $u^*=u^*(h)$ and we also define $\mu(h)=\frac{Eh-1}{2Eh}$. 
From $f'_h(u)=\frac{1}{2E^2 h}(\mu(h)-u)$ and \eqref{trace}, we infer the following:
\begin{itemize}
		\item If $\mu(h)\leq u^*(h)$ then $(u^*,v^*)$ is asymptotically stable.
		\item If $\mu(h)> u^*(h)$ then the stability of $(u^*,v^*)$ depends on $r$.
					More precisely, define $r_{crit}=\frac{\Theta_h(u^*)f_h'(u^*)}{v^*}>0.$ 					(It is easy to show that $r_{crit}\leq 1$).
					\begin{itemize}
							\item If $r>r_{crit}$, then  $(u^*,v^*)$ is asymptotically stable.
							\item  If $r<r_{crit}$, then $(u^*,v^*)$ is unstable.
							\end{itemize}

\end{itemize}
The sign of $\mu(h)-u^*(h)$  with respect to the parameter $h$ remains to be found.\\
On a first hand, the explicit expression of $f_h'$ shows that $f'_{h^*}$  is decreasing and that $f'_{h^*}(\mu(h^*))=0$. Moreover, the definition \eqref{defhstar} of   $h^*$ yields $u^*(h^*)=u_{crit}$ and $f_{h^*}'(u_{crit})=g_{h^*}'(u_{crit})>0$. Hence, $\mu(h^*)<u^*(h)$. It is also clear that $\frac12=\lim\limits_{h\to +\infty} \mu(h)<\lim\limits_{h\to +\infty} u^*(h)=1$. By continuity, we infer that the equation $\mu(h)=u^*(h)$ has at least one solution in $(h^*,+\infty)$. \\
On another hand, if $\mu(h)=u^*(h)$ then $H(h,\mu(h))=0$, which may be rewritten as
$$(Eh+1)^3=4E((Eh)^2+Eh(E\alpha+1)-E\alpha).$$
A simple analysis shows that this equation has exactly one negative solution, one solution in $\left(0,\frac{1}{E}\right)$ and one solution in $\left(\frac{1}{E},+\infty\right)$. Since $h^*>\frac{1}{E}$
 (see  footnote \ref{minhstar}),  this implies that $\mu(h)=u^*(h)$ has exactly one solution in $(h^*,+\infty)$. We note this unique solution $h^{**}=h^{**}(E,\alpha)$.\\
Finally, it is clear from the above arguments  that $\mu(h)<u^*(h)$ if $h\in(h^*,h^{**})$ and  that $\mu(h)>u^*(h)$ if $h>h^{**}$. This ends the proof of the Theorem.\qed

\paragraph{Proofs of properties \ref{prophet}}
Similarly to  the proof of the theorem \ref{thODE}, and  to highlight the role of the parameters $E$ and $\alpha$, let us define
$$H(E,\alpha,h,u)=\frac{1}{E}(1-u)(1+Ehu)-\left(1+\alpha \frac{Eu}{1+Ehu}\right).$$
From the proof of the theorem \ref{thODE}, we know that the quantity $h^*=h^*(E,\alpha)$ and the corresponding $u_{crit}=u_{crit}(E,\alpha)$ are characterized by the two  equations
\begin{subequations}\label{hucrit}
\begin{gather}
\phantom{\partial_u}H(E,\alpha,h^*,u_{crit})=0\label{hucrit:1}\\
\partial_u H(E,\alpha,h^*,u_{crit})=0.\label{hucrit:2}
\end{gather}
\end{subequations}
The implicit function theorem immediatly shows that the maps $(E,\alpha)\mapsto (h^*,u_{crit})$ belongs to $C^1\big((1,+\infty)\times [0,+\infty); \R_+^2\big)$.
\begin{itemize}
\item {\bf Proofs of the growth of $E\mapsto h^*(E,\alpha)$ and of $\alpha\mapsto h^*(E,\alpha)$.}\\
Let $\alpha\geq 0$ be fixed.
Differentiate \eqref{hucrit:1} with respect to $E$ and use \eqref{hucrit:2} yields 
$$\partial_E H(E,\alpha,h^*,u_{crit})+\partial_E h^*(E,\alpha)\cdot \partial_h H(E,\alpha,h^*,u_{crit})=0.$$
We already know that $\partial_h H<0$ and an explicit computation gives
$$\partial_E H(E,\alpha,h^*,u_{crit})=\frac{1}{E (1+Eh^* u_{crit})}>0.$$
It follows that $\partial_E h^*(E,\alpha)>0$.
Similar arguments show that $\partial_\alpha h^*(E,\alpha)>0$.

\item {\bf Computation of ${\displaystyle \lim_{E\to 1}h^*(E,\alpha)}$.}\\
Let $\alpha\geq 0$ be fixed. Since $h^*(\cdot,\alpha)$ is increasing and positive on $(1,+\infty)$, there exists a nonnegative scalar $h^*(\alpha)$ such that
 $h^*(E,\alpha)\to h^*(\alpha)$ as $E\to 1$. To compute this limit, denote $P(E,\alpha,h,u)=(1+Ehu) H(E,\alpha,h,u)$. From \eqref{hucrit} and the definition of $h^*$, one see  that $h^*$ is the minimal value of $h$ such that
\begin{equation}\label{Phstar}\exists u\in [0,1],\; P(E,\alpha,h,u)=\partial_u P(E,\alpha,h,u)=0.\end{equation}
In other words, $h^*$ is the minimal value of $h$ such that $P(E,\alpha,h,\cdot)$ admits a multiple root in $[0,1]$.\\
 By passing to the limit $E\to 1$ in \eqref{Phstar}, we see that that $h^*(\alpha)$  is the minimal value of $h$ such that  
\begin{equation}\label{Phstar1}\exists u\in [0,1],\; P(1,h,\alpha,u)=\partial_u P(1,h,\alpha,u)=0.\end{equation}
Explicit computations give
 $$P(1,\alpha,h,u)=u\left( h^2 u^2+h(2-h)u+\alpha+1-h\right).$$
The multiplicity of the roots of $P(1,\alpha,h,\cdot)$ need now to be discussed.

$\bullet$ If $h<2\sqrt{\alpha}$ then $0$ is the only root of $P(1,\alpha,h,\cdot)$ and the multiplicity  is one.\\
$\bullet$ If $h\geq 2\sqrt{\alpha}$ then $P(1,\alpha,h,\cdot)$ has two other real roots and may be explicitly written as
$$P(1,\alpha,h,u)=h^2 u(u-u_-)(u-u_+)$$ where
$$u_{\pm}=\frac{1}{2h}\left(h-2\pm\sqrt{(2-h)^2-4(\alpha+1-h)}\right).$$
\begin{itemize}
\item If $h=2\sqrt{\alpha}$  then   $u_{-}=u_{+}=1-\frac{1}{\sqrt{\alpha}}$ and this multiple root belongs to $[0,1)$ if and only if $\alpha\geq 1$.\\
\item If $2\sqrt{\alpha}<h<\alpha+1$ then the three roots $0$, $u_-$ and $u_+$ are distinct and there is no multiple root.\\
\item If $h=\alpha+1$ then either $0=u_-$ or $0=u_+$, depending on the sign of $\alpha-1$. In both cases $0$ is a multiple root.\\
\item Finally, if $h>\alpha+1$ then $u_-<0<u_+$ and all the roots of $P(1,\alpha,h,\cdot)$ have multiplicity one.\\ 
\end{itemize}
The above discussion shows, using the characterization of $h^*(\alpha)$, that 
\begin{equation}\label{limithetEto1}
h^*(\alpha)=\left\{\begin{array}{l} 1+\alpha\text{ if } \alpha\leq 1\\ 2\sqrt{\alpha}\; \text{ if } \alpha> 1.\end{array}\right.
\end{equation}
Note that $\alpha\mapsto h^*(\alpha)$ belongs to $C^1([0,+\infty),\R^+)$.  \qed
\item {\bf Computation of ${\displaystyle \lim_{E\to +\infty}h^*(E,\alpha)}$.}\\
Since $h^*(\cdot,\alpha)$ is positive and increasing, one has 
$$\lim_{E\to +\infty}h^*(E,\alpha) = \frac{1}{\ell_\alpha} $$   for some nonnegative number
$\ell_\alpha$ (wherein we have set $\frac{1}{0}=+\infty$).\\

However, since $u_{crit}(E,\alpha)$ is bounded, up to a subsequence again denoted by $E$, one has $u_{crit}(E,\alpha)\to \mu$ 
for some positive number $\mu$ (eventually depending on $\alpha$). By passing to the limit in \eqref{hucrit:1}, we deduce $\mu>0$; for otherwise one obtains $0=H(E,\alpha,h^*,u_{crit})\to +\infty$. It follows that taking $E\to+\infty$ in \eqref{hucrit:2}, one obtains $\mu=1/2$. Thus, by passing to the limit in \eqref{hucrit:1}, one gets
 $$\frac{1}{\ell_\alpha^2}=\frac{2}{\ell_\alpha}+2\alpha$$ and finally ($\ell_\alpha$ being nonnegative)
$$\frac{1}{\ell_\alpha}=2+2\sqrt{1+\alpha};$$ \qed
\item {\bf Computation of ${\displaystyle h^*(E,0)}$.}
If $\alpha=0$  one gets 
\begin{equation}\label{alpha0}
P(E,h,0,u)=\frac{1}{E}(1+Ehu)\left(-Ehu^2+u(Eh-1)+1-E\right).
\end{equation}
Standard computations show that $P(E,h,0,\cdot)$ admits two nonnegative roots  if and only if $h>\frac{1}{E}(2E-1+\sqrt{E(E-1)}:=h_1(E)$ and one double nonnegative root if $h=h_1(E)$.
This shows  that $h^*(E,0)=h_1(E)$. \qed
\item {\bf Computation of  ${\displaystyle \lim_{\alpha\to +\infty}h^*(E,\alpha)}$.}
Since $E\mapsto h^*(E,\alpha)$ is increasing one gets for any $E\geq 1$, $h^*(E,\alpha)\geq h^*(\alpha)$. 
The explicit expression \eqref{limithetEto1} of $h^*(\alpha)$ implies $ \lim\limits_{\alpha\to +\infty}h^*(E,\alpha)=+\infty.$\qed
\end{itemize}
\subsection{Proof of the Theorem \ref{thaspcontrol}.}\label{proofaspcontrol}
Let $E>1$ be fixed and $(u(t,x),v(t,x))$ be a solution of \eqref{sys2} with initial condition $(u_0(x),v_0(x))$ verifying \eqref{CI}.\\
\paragraph{Step 1.}
It is clear that $u(t,x)\geq 0$ for any $t>0$ which implies
$$\partial_t v(t,x)-d\Delta_x v(t,x)\geq r v(t,x)(1-v(t,x)),\quad t>0,\quad x\in \R.$$
Thus, using the comparison principle and the hypothesis that $v(0,x)\geq 1$, we deduce 
$v(t,x)\geq 1$ for any $t\geq 0$ and $x\in\R$. It follows that
$$\partial_t u(t,x)-\Delta_x u(t,x)\leq u(t,x)(1-u(t,x))-\frac{Eu(t,x)}{1+Ehu(t,x)},\quad t>0,\quad x\in \R.$$
By the comparison principle, we infer that any  solution $\overline{u}$  of 
\begin{equation}\label{scalarv1}
\partial_t \overline{u}(t,x)-\Delta_x \overline{u}(t,x)= \overline{u}(t,x)(1-\overline{u}(t,x))-\frac{E\overline{u}(t,x)}{1+Eh\overline{u}(t,x)},
\; t>0,\quad x\in \R\end{equation}
such that $\overline{u}(0,x)\geq u_0(x)$ for any $x\in \R$ satisfies
$$\forall t>0,\quad \forall x\in \R,\quad u(t,x)\leq \overline{u}(t,x).$$
In particular, let $\phi(t)$ be the solution of the ordinary differential equation
\begin{equation}\label{scalarv1ode}
\frac{d}{dt} \phi(t)= \phi(t)(1-\phi(t))-\frac{E\phi(t)}{1+Eh\phi(t)},\quad t>0
\end{equation}
together with the initial condition $\phi(0)=1$.\\
$\phi$ is a homogeneous solution of \eqref{scalarv1} and from $u(0,x)\leq 1=\phi(0)$  we deduce
$$\forall t>0,\quad \forall x\in \R,\quad 0\leq u(t,x)\leq \phi(t).$$
\paragraph{Step 2.}
Behavior of $\phi(t)$ as $t\to+\infty$. \\
It is clear that $0$ is always a steady state of \eqref{scalarv1ode} and is asymptotically stable for $E>1$.\\ 
Now, let $\phi_0> 0$ be a positive steady state of \eqref{scalarv1ode}. $\phi_0$ is a root of the polynomial $P(E,h,0,\cdot)$ which is studied in the proof of the property \ref{prophet} (see \eqref{alpha0}).
Hence, if $h\geq h_1(E)$ then \eqref{scalarv1ode} has two positive steady states that we denote as $u^-(E,h)\leq u^+(E,h)\leq1$ explicitly given by
\begin{equation}\label{upm}
u^{\pm}(E,h)=\frac{1}{2}\left(1-\frac{1}{Eh}\pm\sqrt{\left(1-\frac{1}{Eh}\right)^2-4\frac{E-1}{Eh}}\right).
\end{equation}
A linear analysis shows  that for $h>h_1(E)$,  $u^-(E,h)$ is unstable and $u^+(E,h)$ is (asymptotically) stable.\\
Finally, classical arguments show that $\phi(t)\to u^+(E,h)$ if $h>h_1(E)$ and $\phi(t)\to 0$ if $h<h_1(E)$.  This ends the proof of the theorem. \qed\\

\subsection{Proof of the Theorem \ref{thcontrol}.}\label{proofcontrol}
\noindent Let $E>1$ be fixed and $(u(t,x),v(t,x))$ be a solution to \eqref{sys2} with initial condition $(u_0(x),v_0(x))$  verifying \eqref{CI}.\\
By the  argument of the step 1 of the proof  of \ref{proofaspcontrol}, one already knows that: 
\begin{equation}\label{abovecontrol}
u(t,x)\leq \overline{u}(t,x)
\end{equation}
where $\overline{u}(t,x)$ is a solution of \eqref{scalarv1} with $\overline{u}(0,x)=u_0(x)$. Moreover, from the step 2 of the proof of \ref{proofaspcontrol}, one knows
that  if $h>h_1(E)$ then the equation \eqref{scalarv1} is {\it bistable} since \eqref{scalarv1} admits two stable nonnegative steady states: $u=0$ and $\overline{\mu}=u^+(E,h)<1$.
We prove here that, in that case, there exists  a traveling wave connecting  $\overline{\mu}$ to $0$ at a negative speed if and only 
if $h<h^-(E)$ 
for some (implicit) number $h^-(E)>h_1(E)$.
This  implies that for any $x\in\R$,  $\overline{u}(t,x)\to 0$ if $h<h^-(E)$ and $\overline{u}(t,x)\to \overline{\mu}$ if $h>h^{-}(E)$,  which proves  the theorem.\\

Let $h>h_1(E)$ be fixed. It can be proven (see \cite{fife}, \cite{volperts} and the reference therein) that  there exists a unique speed $c=c(E,h)$ such that  \eqref{scalarv1} admits a traveling solution of speed $c$ which connects   
$\overline{\mu}$ to $0$. More precisely, there exists a profile  $U(\xi)$ verifying $U(-\infty)=\overline{\mu}$, $U(+\infty)=0$ and $U'(\pm \infty)=0$, such that $U(x-ct)=u(x,t)$ is a solution of \eqref{scalarv1}. 
Moreover (see \cite{fife}) this traveling wave describes the 
asymptotic behavior of all solutions provided the initial condition \eqref{CI} are verified.
The theorem is  proven by showing that there exists $h^{-}(E)$ such that $c(E,h)<0$ if $h<h^{-}(E)$ and $c(E,h)>0$ if  $h>h^{-}(E)$.
This result is a direct consequence of  the two following lemma.
The first lemma gives a characterization of the sign of $c$ by an explicit function (see figure \ref{fig:W}).
\begin{lemma}\label{lemmaspeed}
Define 
$$W(E,h,u))=\int_{0}^{u} \left(s(1-s)-\frac{Es}{1+Ehs}\right)ds.$$
Then $sign(c(E,h))=sign(W(E,h,\overline{\mu}))$ where $\overline{\mu}:=u^+(E,h)$.
\end{lemma}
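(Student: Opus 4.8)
The plan is to establish the classical formula relating the sign of the wave speed to the sign of a "potential" integral for a scalar bistable reaction-diffusion equation, applied to \eqref{scalarv1} with reaction term $F(u):=u(1-u)-\frac{Eu}{1+Ehu}$. Recall that the traveling wave profile $U(\xi)$, $\xi=x-ct$, satisfies the ODE $U''+cU'+F(U)=0$ with $U(-\infty)=\overline{\mu}$, $U(+\infty)=0$ and $U'(\pm\infty)=0$. The standard trick is to multiply this equation by $U'(\xi)$ and integrate over $\xi\in\R$. The term $U''U'$ integrates to $\frac12[(U')^2]_{-\infty}^{+\infty}=0$ by the boundary conditions, and the term $F(U)U'$ integrates to $\int_{\R} F(U(\xi))U'(\xi)\,d\xi = \int_{\overline\mu}^{0} F(u)\,du = -W(E,h,\overline\mu)$ after the change of variables $u=U(\xi)$. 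What remains is $c\int_{\R}(U'(\xi))^2\,d\xi$, so we obtain the identity
\begin{equation}\label{eq:speedsign}
c(E,h)\int_{\R} \big(U'(\xi)\big)^2\,d\xi = W(E,h,\overline\mu).
\end{equation}
Since $U$ is a non-constant monotone profile, $\int_{\R}(U')^2\,d\xi$ is a strictly positive (finite) number, and hence $\mathrm{sign}(c(E,h))=\mathrm{sign}(W(E,h,\overline\mu))$, which is exactly the claim.

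The steps, in order, would be: (1) invoke the existence/uniqueness of the bistable traveling wave (already granted via \cite{fife,volperts}, valid here because $h>h_1(E)$ makes \eqref{scalarv1} bistable with stable states $0$ and $\overline\mu=u^+(E,h)$), together with the fact that $U$ is strictly monotone and that $U,U'$ have the stated limits; (2) record the profile ODE $U''+cU'+F(U)=0$; (3) multiply by $U'$ and integrate over $\R$, justifying that all three integrals converge — this uses the exponential decay of $U'$ at $\pm\infty$, which follows from the linearization at the two stable equilibria (both have $F'<0$ there, a point worth checking but standard); (4) evaluate the boundary term as zero and rewrite $\int F(U)U'$ as the integral $\int_{\overline\mu}^0 F = -W(E,h,\overline\mu)$ via substitution; (5) conclude \eqref{eq:speedsign} and read off the sign equality.

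The only genuine obstacle is the integrability/justification in step (3): one must know a priori that $U'\in L^2(\R)$ and that $U'\to 0$ exponentially (or at least fast enough that $(U')^2\to 0$ and the integration by parts has no boundary contribution). This is where the bistability hypothesis $h>h_1(E)$ is essential — it guarantees $F'(0)<0$ and $F'(\overline\mu)<0$, so the equilibria are hyperbolic for the profile ODE and the standard phase-plane/stable-manifold analysis gives exponential approach, hence $U'$ decays exponentially and all integrals in the energy identity are finite. Everything else is a routine "multiply by $U'$ and integrate" computation; no delicate estimate beyond this decay is needed. I would state the exponential decay as a consequence of the cited bistable theory rather than reprove it, and then the lemma follows in a few lines.
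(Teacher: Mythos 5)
Your proposal is correct and follows essentially the same route as the paper: write the profile equation $-cU'=U''+F(U)$, multiply by $U'$, integrate over $\R$, kill the boundary term and change variables to get $c\int_\R (U')^2\,d\xi=W(E,h,\overline\mu)$, whence the sign equality. The paper carries this out formally without dwelling on the decay of $U'$; your extra remarks on the exponential decay at the hyperbolic equilibria (using $E>1$ and $h>h_1(E)$) merely justify the integration and do not change the argument.
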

\begin{proof}[Proof]
Denote $c=c(E,h)$ and let  $\xi=x-ct$ and $U(\xi)=u(x,t)$ with $U(-\infty)=\overline{\mu}$, $U(+\infty)=0$ and $U'(\pm \infty)=0$. We have
$$-cU'=U''+U(1-U)-\frac{EU}{1+EhU}=U''+\frac{\partial W}{\partial U} (E,h,U).$$
Multiplying by $U'$ and integrating over $\mathbb{R}$ one gets
\begin{align*}
-c\int_{-\infty}^{+\infty} (U')^2 dZ&=\int_{-\infty}^{+\infty}\left(U(1-U)-\frac{EU}{1+EhU}\right)U'dZ\\
&=\int_{\overline{\mu}}^0\frac{\partial W}{\partial U} (E,h,U)dU=-W(E,h,u^+)\\
\end{align*}
and $sign(c)=sign(W(E,h,u^+(E,h))$ follows.\qed
\end{proof}

\begin{figure}[htb]

\centering
\begin{tikzpicture}[x=3cm,y=12cm,>=stealth]
\def\a{0.55}
\draw [->] (-0.4,0) -- (1.4,0) node [below]{$u$};
\draw [->] (0,-0.1) -- (0,0.1);
\draw plot [domain=-0.2:1.3,smooth] (\x, {(\x)^4-(4/3)*(1+\a)*(\x)^3+2*(\a)*(\x)^2}); 

\draw (-0.05,0) node [below] {$0$};
\draw (\a,0) node [below] {$u^-$};
\draw (1,0) node [below] {$u^+$};
\draw (0.5,0.15) node [above] {\small $-W(u^+)>0$ and then $c<0$};
\draw [dotted](\a,0) --(\a,{(\a)^4-(4/3)*(1+\a)*(\a)^3+2*(\a)*(\a)^2});
\draw [dotted](1,0) --(1,{(1)^4-(4/3)*(1+\a)*(1)^3+2*(\a)*(1)^2});

\end{tikzpicture}
$\qquad \qquad$
\begin{tikzpicture}[x=3cm,y=12cm,>=stealth]
\def\a{0.4}
\draw [->] (-0.4,0) -- (1.4,0) node [below] {$u$};
\draw [->] (0,-0.1) -- (0,0.1);
\draw plot [domain=-0.2:1.3,smooth] (\x, {(\x)^4-(4/3)*(1+\a)*(\x)^3+2*(\a)*(\x)^2}); 
\draw (-0.05,0) node [below] {$0$};
\draw (\a,0) node [below] {$u^-$};
\draw (1,0) node [above] {$u^+$};
\draw (0.5,0.15) node [above] {\small $-W(u^+)<0$ and then $c>0$};
\draw [dotted](\a,0) --(\a,{(\a)^4-(4/3)*(1+\a)*(\a)^3+2*(\a)*(\a)^2});
\draw [dotted](1,0) --(1,{(1)^4-(4/3)*(1+\a)*(1)^3+2*(\a)*(1)^2});
\end{tikzpicture}

\caption{\label{fig:W}Graph of $u\mapsto -W(E,h,u)$. The steady states $0$, $u^-$ and $u^+$ correspond to critical points of the potential $-W$. 
A stable steady state is a local minimum of $W$  and an unstable steady state  is a local maximum.
One sees that $0$ and $u^+$ are two stable steady states. The sign of $c$ characterizes which of them is 
the final global attractor: if $c<0$, then $0$ is the global attractor.
If $c>0$, then $u^+$ is the global attractor. 
}
\end{figure}
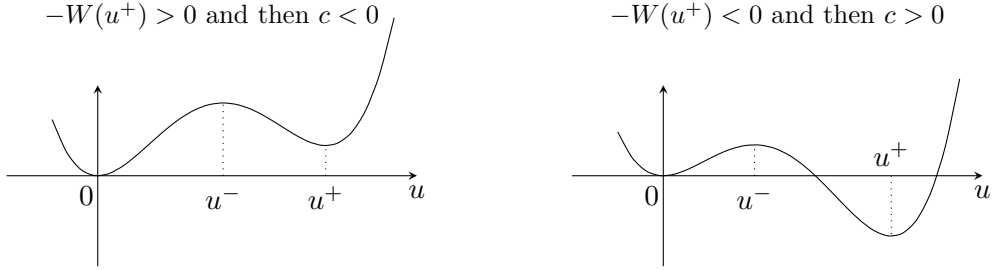
The second lemma gives the sign of $W$ with respect to $h$.
\begin{lemma}\label{finallemma}
For any $E>1$, there exists (a unique) $h^-(E)>h_1(E)$ such that 
$$sign(W(E,h,u^+(E,h)))=sign\left(h-h^-(E)\right).$$
\end{lemma}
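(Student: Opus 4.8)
The plan is to fix $E>1$ and regard $h\mapsto \Phi(h):=W(E,h,u^+(E,h))$ as a function on $[h_1(E),+\infty)$, which is exactly the range on which $u^+(E,h)$ from \eqref{upm} is defined. I would prove that $\Phi$ is strictly increasing, is strictly negative at the left endpoint $h_1(E)$, and tends to a strictly positive limit as $h\to+\infty$; the unique zero of $\Phi$ is then the desired $h^-(E)$, and the sign statement $\mathrm{sign}(W(E,h,u^+(E,h)))=\mathrm{sign}(h-h^-(E))$ follows immediately.

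First I would record the factorization $g_h(u):=u(1-u)-\frac{Eu}{1+Ehu}=\frac{u\,Q_h(u)}{1+Ehu}$ with $Q_h(u)=(1-u)(1+Ehu)-E=-Ehu^2+(Eh-1)u+(1-E)$, a downward parabola whose two roots in $(0,1)$, for $h>h_1(E)$, are precisely $u^-(E,h)<u^+(E,h)$. Hence $g_h<0$ on $(0,u^-)$ and $g_h>0$ on $(u^-,u^+)$, so $\Phi(h)=\int_0^{u^-}g_h+\int_{u^-}^{u^+}g_h$ is the sum of a negative and a positive term — this is precisely why the sign of $\Phi$ is not evident and the lemma needs proof. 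Evaluating at $h=h_1(E)$, where $u^-=u^+=:u_1$ and $Q_{h_1}(u)=-Eh_1(u-u_1)^2\le 0$, gives $\Phi(h_1(E))=\int_0^{u_1}g_{h_1}<0$ since the integrand is strictly negative on $(0,u_1)$.

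The heart of the argument is $\Phi'(h)$. By the chain rule $\Phi'(h)=\partial_h W(E,h,u^+(E,h))+\bigl(\partial_u W\bigr)(E,h,u^+(E,h))\cdot\partial_h u^+(E,h)$, but $\partial_u W(E,h,u)=g_h(u)$ vanishes at $u=u^+$ because $u^+$ is a steady state; the second term therefore drops out and we never have to differentiate the implicit function $u^+(E,h)$. Since only the $-\frac{Eu}{1+Ehu}$ part of the integrand depends on $h$, this leaves $\Phi'(h)=\partial_h W(E,h,u^+(E,h))=\int_0^{u^+(E,h)}\frac{E^2s^2}{(1+Ehs)^2}\,ds>0$, so $\Phi$ is strictly increasing on $[h_1(E),+\infty)$ (continuous up to the endpoint, $C^1$ on the interior, which is enough).

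It remains to see that $\Phi$ eventually becomes positive. Using the explicit primitive $W(E,h,u)=\frac{u^2}{2}-\frac{u^3}{3}-\frac{u}{h}+\frac{1}{Eh^2}\ln(1+Ehu)$ together with $u^+(E,h)\to 1$ as $h\to+\infty$, read off from \eqref{upm}, one gets $\Phi(h)\to\frac12-\frac13=\frac16>0$. Combined with $\Phi(h_1(E))<0$ and strict monotonicity, the intermediate value theorem produces a unique $h^-(E)\in(h_1(E),+\infty)$ with $\Phi(h^-(E))=0$ and $\mathrm{sign}(W(E,h,u^+(E,h)))=\mathrm{sign}(h-h^-(E))$, which is the claim. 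The only place needing a little care is confirming that $\Phi$ is \emph{strictly} negative at $h_1(E)$ (so that $h^-(E)>h_1(E)$), which is immediate from the sign of $g_{h_1}$ on $(0,u_1)$; the genuinely useful observation — and the main obstacle one has to spot — is the cancellation of the $\partial_u W$ term in $\Phi'$, after which the entire lemma reduces to a one-line monotonicity computation plus two limit evaluations.
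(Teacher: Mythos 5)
Your proof is correct, and its overall skeleton coincides with the paper's: both show that $h\mapsto\mathcal{W}(E,h):=W(E,h,u^+(E,h))$ is increasing (via the key cancellation $\partial_u W(E,h,u^+(E,h))=0$, which you rightly single out), is negative at $h=h_1(E)$, and tends to $1/6>0$ as $h\to+\infty$, then conclude by the intermediate value theorem. The two places where you diverge are both simplifications. For the monotonicity, you differentiate under the integral sign and get $\partial_h\mathcal{W}=\int_0^{u^+}\frac{E^2s^2}{(1+Ehs)^2}\,ds>0$ with a manifestly positive integrand; the paper instead differentiates the closed-form primitive and must then verify that $z\mapsto z-2\ln(1+z)+\frac{z}{1+z}$ is positive for $z>0$ — the two computations are identical after the substitution $z=Ehu^+$, but yours makes the sign transparent with no auxiliary inequality. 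The more substantive difference is at the endpoint: you observe that at $h=h_1(E)$ the quadratic factor has a double root, so the integrand equals $-Eh_1\,s(s-u_1)^2/(1+Eh_1s)<0$ on $(0,u_1)$ and $\mathcal{W}(E,h_1(E))<0$ follows in one line. The paper instead sets $g(E)=\mathcal{W}(E,h_1(E))$, computes $g'(E)$ explicitly, proves $g'(E)<0$ via the inequality $2z/(z+2)<\ln(1+z)$, and uses $g(1)=0$ to conclude $g(E)<0$ for $E>1$; your double-root factorization replaces that entire monotonicity-in-$E$ argument and is arguably the cleaner route, at the cost of not producing the auxiliary information that $E\mapsto\mathcal{W}(E,h_1(E))$ is decreasing (information the paper does not reuse anyway — the later proof of Properties \ref{prophm} recomputes $\partial_E W$ directly). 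Your side remark on the sign structure of the integrand and your handling of regularity of $u^+$ at the endpoint are fine as stated.
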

Lemma \ref{lemmaspeed} and \ref{finallemma} show  that
\begin{itemize}
\item There is invasion of prey for \eqref{scalarv1}  ($c(E,h)>0$) if $h\in (h^-(E),+\infty)$.
In that case, for any $x\in\R$, $\overline{u}(t,x)\to u^+(E,h)$ as $t\to+\infty$.
\item There is extinction of prey for  \eqref{scalarv1} ($c(E,h)<0$) if $h\in [h_1(E),h^-(E))$.
In that case, for any $x\in\R$, $\overline{u}(t,x)\to 0$ as $t\to+\infty$.
\end{itemize}
In particular, we infer from the inequality  \eqref{abovecontrol}, that if $h\in [h_1(E),h^-(E))$, then  for any $x\in\R$, $u(t,x)\to 0$ as $t\to+\infty$. This ends the proof of the theorem.\qed\\

\noindent It remains to prove the lemma  \ref{finallemma}.
\paragraph{Proof of lemma \ref{finallemma}.}
Define $\mathcal{W}(E,h)=W(E,h,u^+(E,h))$. From the  lemma \ref{lemmaspeed}, we know that  $sign(c)=sign(\mathcal{W}(E,h))$. We show here that there exists 
$h^-(E)>h_1(E)$ such that $sign(\mathcal{W}(E,h))=sign(h-h^-(E))$.\\
\paragraph{Step 1.} Differentiate $\mathcal{W}$ with respect to $h$ gives
$$\partial_h \mathcal{W} (E,h)=\partial_h W (E,h,u^+(E,h))+\partial_u W(E,h,u^+(E,h)) \partial_h u^+(E,h).$$
By the very definition of $W$ and $u^+$, one has
$\partial_u W(E,h,u^+(E,h))=0$, which yields 
$$\partial_h \mathcal{W}(E,h)=\partial_h W (E,h,u^+(E,h)).$$
Explicit calculations show that
\begin{equation}\label{Wexplicit}
W(E,h,u)=\frac{u^2}{2}-\frac{u^3}{3}- \frac{u}{h}+\frac{1}{E h^2} \ln(1+Ehu).\end{equation}
Differentiate this expression with respect to $h$ and denoting $z=Ehu^+(E,h)$ provides
$$\partial_h W(E,h,u^+(E,h))=\frac{1}{Eh^3}\left(z-2\ln(1+z)+\frac{z}{1+z}\right).$$
A standard analysis shows that the map $z\mapsto z-2\ln(1+z)+\frac{z}{1+z}$ takes positive values for $z>0$.
It follows that  the map $h\mapsto \mathcal{W}(E,h)$ is increasing. 
\paragraph{Step 2.} Recalling  that  $u^+$ is the largest roots of \eqref{alpha0}, one verifies that $u^+(E,h)\to 1$ as $h\to +\infty$ and  $\mathcal{W}(E,+\infty)=1/2-1/3=1/6>0$.
\paragraph{Step 3.}This step consists in proving that for any $E>1$, $\mathcal{W}(E,h_1(E))=W(E,h_1(E),u^+(E,h_1(E))):=g(E)$ is negative.\\

\noindent From the explicit expression \eqref{upm} of $u^+(E,h)$ and the definition of  $h_1(E)$, we get
$$u^+(E,h_1(E))=\frac{1}{2}\left(1-\frac{1}{Eh_1(E)}\right)=1-E+\sqrt{E(E-1)}.$$
On a first hand, we have
$$g'(E)=\partial_E W(E,h_1(E),u^+(E,h_1(E))) +\partial_h W(E,h_1(E),u^+(E,h_1(E)))\cdot h_1'(E).$$
Straightforward calculations give
%
$$g'(E)=\left(\frac{1}{Eh_1(E)}\right)^2\left[2-\ln(1+z)\left(1+\frac{2}{z}\right)\right].$$
wherein we have set
$$z=Eh_1(E)u^+(E,h_1(E))=\sqrt{E-1}\left(\sqrt{E}+\sqrt{E-1}\right)=\frac12 \left(Eh_1(E)-1\right).$$
A standard analysis shows that $2\frac{z}{z+2}<\ln(1+z)$ for any $z>0$. Thus  $g'(E)<0$ for any $E>1$. \\
   On another hand, we have $h_1(1)=1$ and $u^+(1,1)=0$, so that  $g(1)=0$. It follows that  $g(E)<0$ for any  $E>1$.\\
\paragraph{Conclusion}
For any $E> 1$, the map $h\mapsto \mathcal{W}(E,h)$ is  increasing and verifies  $\mathcal{W}(E,h_1(E))<0$ and $\lim\limits_{h\to+\infty}\mathcal{W}(E,h)=1/6$.
By continuity, for any $E>1$, there exists a unique $h^-=h^-(E)\in (h_1(E),+\infty) $ such that $sign(W(E,h,u^+))=sign(h-h^-(E))$. This ends the proof of  lemma \ref{finallemma}.\qed

\paragraph{Proofs of properties \ref{prophm}.}
\noindent Let $E>1$ be fixed.
\begin{itemize}
 \item {\bf Growth of $h^-(\cdot)$.}\\
 Recall that $h^-(E)$ is characterized by
\begin{equation}\label{caracthmoins}
W(E,h^-(E),u^+(E,h^-(E)))=0
 \end{equation}
 where the expressions of $W$ and $u^+$ are respectively given in \eqref{Wexplicit} and \eqref{upm}.
By definition of $W$, one has
$$\partial_u W\left(E,h^-(E),u^+(E,h^-(E))\right)=0.$$
Hence
$$\partial_E W\left(E,h^-(E),u^+(E,h^-(E))\right)+\frac{d h^-}{dE} (E)\cdot \partial_h W\left(E,h^-(E),u^+(E,h^-(E))\right)=0.$$
The explicit computation of $\partial_E W$ and $\partial_h W$ are done in proof \ref{proofcontrol} and one gets
$$\partial_E W\left(E,h^-(E),u^+(E,h^-(E))\right)<0 \text{ and } \partial_h W\left(E,h^-(E),u^+(E,h^-(E))\right)>0$$
so that 
$$\frac{d h^-}{dE} (E)=-\frac{\partial_E W\left(E,h^-(E),u^+(E,h^-(E))\right)}{ \partial_h W\left(E,h^-(E),u^+(E,h^-(E))\right)}>0$$
as needed.\qed

\item {\bf Limit of $h^-(E)$ as $E\to 1$.}\\
\noindent Let $E>1$. One knows that $h^{-}(\cdot)$ is increasing on $(1,+\infty)$ and minored by $h_1(E)>0$. Hence $h^-(E)$ admits a limit $\ell^-$ as $E\to 1$, and from $h_1(1)=1$, one obtains
$$\ell^-\geq 1.$$
The explicit expressions \eqref{upm} of $u^\pm$ yield
$$u^{-}(1,\ell^-)=0\text{ and }u^+(1,\ell^-)=\frac{1-\ell^-}{\ell^-}.$$
Assume by contradiction that $u^+(1,\ell^-)>0$. Then, 
for any $h\geq 1$ and $0< u<u^+(h,1)$, we have
$$\partial_uW(1,h,u)=u(1-u)-\frac{1}{1+hu}>0 \text{ and }   W(1,h,0)=0.$$  
This implies $W(1,\ell^-,u^+(1,\ell^-))<0$, which contradicts \eqref{caracthmoins}. \\
It follows that $u^+(1,\ell^-)=0$. Thus $\ell^-=1$, which reads
$\lim\limits_{E\to 1} h^-(E)=1.$\qed

\item {\bf Limit of $h^-(E)$ as $E\to +\infty$.}\\
One knowns that $h^-(E)$ is increasing so that there is some $\ell\geq0$ such that 
${\displaystyle\lim_{E\to +\infty}h^-(E)= \frac{1}{\ell}}$ (wherein we have set $\frac{1}{\infty}=0$). 
By taking  the limit $E\to +\infty$ in \eqref{upm}, we obtain
$$\lim_{E\to+\infty} u^+(E,h^-(E))=u_\ell:=\frac{1}{2}(1+\sqrt{1-4\ell})>0.$$
and then, by  taking  the limit in \eqref{caracthmoins},
$$\frac{u_\ell^2}{2}-\frac{u_\ell^3}{3}-\frac{u_\ell}{\ell}=0. $$
Thus, $\ell=\frac{3}{16}$ which reads, $\lim\limits_{E\to+\infty} h^-(E)=\frac{16}{3}.$
\end{itemize}
\subsection{Invasion conditions, proof of Theorem \ref{thnocontrol}}\label{proofnocontrol}
As in the proof \ref{proofcontrol}, we start here by showing that 
 $0\leq \underline{u}(t,x)\leq u(t,x)$ for some function $\underline{u}$ verifying a {\it scalar}
reaction-diffusion equation \eqref{scalarvbar}  depending on $E$, $h$ and $\alpha$. 
Next, we show that there exists $h^+(E,\alpha)$ such that $\lim_{t\to+\infty}\underline{u}(t,x)=\underline{\mu}>0$ when $h>h^+(E,\alpha)$.
This step is done using the proof of Theorem \ref{thcontrol} together with an appropriate change of variables.\\

\paragraph{Step 1.}
\noindent Let $E>1$ be fixed.
In general, using $0\leq u\leq 1$,  one has $1\leq v\leq \overline{v}$ with 
$$\overline{v}:=\overline{v}(E,h,\alpha)=1+\alpha\frac{E}{1+Eh}.$$ 
\\
From the estimate $v\leq \overline{v},$ we get $\underline{u}(t,x)\leq u(t,x)$ where $\underline{u}$ 
verifies 
\begin{equation}\label{scalarvbar}
\partial_t u=\Delta_x u +u(1-u)-\frac{E\overline{v}u}{1+Ehu},\quad t>0,\quad x\in \R.
\end{equation}
Denoting $\widetilde{E}=E\overline{v}$ and $\widetilde{h}=\frac{h}{\overline{v}}$, this equation reads simply
\begin{equation}\label{scalarv1tilda}
\partial_t u=\Delta_x u +u(1-u)-\frac{\widetilde{E}u}{1+\widetilde{E}\widetilde{h}u},\quad t>0,\quad x\in \R. 
\end{equation}
Note that, removing the $\widetilde{}$, this equation is nothing but \eqref{scalarv1}.
As a consequence, the proof of the theorem \ref{thcontrol} implies the following result on \eqref{scalarv1tilda}.
\begin{lemma}
Let $\widetilde{u}(t,x)$ be a solution of \eqref{scalarv1tilda} verifying the initial condition \eqref{CI}. 
There exists $h^-(\widetilde{E})>h_1(\widetilde{E})$ such that 
\begin{itemize}
\item if 
$\widetilde{h}<h^-(\widetilde{E})$ then for any $x\in \R$, $\lim\limits_{t\to+\infty}\widetilde{u}(t,x)= 0$ (extinction),
\item if $\widetilde{h}>h^-(\widetilde{E})$ then for any $x\in\R$, $\lim\limits_{t\to+\infty}\widetilde{u}(t,x)= \overline{\mu}(\widetilde{E},\widetilde{h})>0$ (invasion).
\end{itemize}
\end{lemma}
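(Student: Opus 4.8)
The plan is to notice that the stated lemma is not really a new result: equation \eqref{scalarv1tilda} is \emph{exactly} equation \eqref{scalarv1} with the parameter $E$ replaced by $\widetilde E$ and $h$ by $\widetilde h$, and the initial datum \eqref{CI} is unchanged. Hence every assertion proved about \eqref{scalarv1} in the proofs of Theorems \ref{thaspcontrol} and \ref{thcontrol} transfers verbatim once we substitute $(E,h)$ by $(\widetilde E,\widetilde h)$. The only thing to check before doing so is that the standing hypothesis $E>1$ survives the substitution: since $\overline v=1+\alpha\frac{E}{1+Eh}\geq 1$ and $E>1$, we have $\widetilde E=E\,\overline v>1$, and $\widetilde h=h/\overline v>0$. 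So the hypotheses of Theorems \ref{thaspcontrol}--\ref{thcontrol} are met for the pair $(\widetilde E,\widetilde h)$.

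Next I would recall the relevant conclusions in this notation. For $\widetilde h<h_1(\widetilde E)$, the comparison argument from the proof of Theorem \ref{thaspcontrol} applies: the solution of \eqref{scalarv1tilda} with data in $[0,1]$ is dominated by the homogeneous solution $\phi$ of the associated ODE with $\phi(0)=1$, and Step~2 there shows $\phi(t)\to 0$ when $\widetilde h<h_1(\widetilde E)$; hence $\widetilde u(t,x)\to 0$. For $\widetilde h>h_1(\widetilde E)$ the equation is bistable, with stable states $0$ and $u^+(\widetilde E,\widetilde h)=:\overline\mu(\widetilde E,\widetilde h)\in(0,1)$ given by the explicit formula \eqref{upm} and an intermediate unstable state $u^-(\widetilde E,\widetilde h)$; by Lemmas \ref{lemmaspeed} and \ref{finallemma} there is a unique front speed $c(\widetilde E,\widetilde h)$ for the front joining $\overline\mu$ to $0$, and a unique $h^-(\widetilde E)>h_1(\widetilde E)$ with $\operatorname{sign} c(\widetilde E,\widetilde h)=\operatorname{sign} W(\widetilde E,\widetilde h,\overline\mu)=\operatorname{sign}(\widetilde h-h^-(\widetilde E))$.

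Finally I would invoke the Fife--McLeod convergence result cited in \cite{fife}: under \eqref{CI}, the solution of the scalar bistable equation converges as $t\to+\infty$, locally uniformly in $x$, to a translate of the front $U(x-ct)$ with $U(-\infty)=\overline\mu$ and $U(+\infty)=0$. If $h_1(\widetilde E)<\widetilde h<h^-(\widetilde E)$ then $c<0$, so at each fixed $x$ the argument $x-ct\to+\infty$ and $\widetilde u(t,x)\to U(+\infty)=0$; together with the monostable case this gives $\widetilde u(t,x)\to 0$ for every $\widetilde h<h^-(\widetilde E)$ and every $x$. If $\widetilde h>h^-(\widetilde E)$ then $c>0$, so $x-ct\to-\infty$ and $\widetilde u(t,x)\to U(-\infty)=\overline\mu(\widetilde E,\widetilde h)>0$ for every $x$. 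This is precisely the claimed dichotomy.

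I do not expect a genuine obstacle: the lemma is a corollary, and the only care required is the bookkeeping noted above --- checking $\widetilde E>1$, and remembering that $h_1$, $h^-$, $u^\pm$, $W$ were established as functions of an arbitrary $E>1$ and $h>0$, so that specialising their arguments to $(\widetilde E,\widetilde h)$ is legitimate. The substantive work of the ambient Theorem \ref{thnocontrol} lies beyond this lemma, in Step~2: converting the scalar threshold $\widetilde h=h^-(\widetilde E)$, i.e. the fixed-point equation $h=\overline v(E,h,\alpha)\,h^-\!\big(E\,\overline v(E,h,\alpha)\big)$, into a well-defined and unique threshold $h^+=h^+(E,\alpha)$ for the original variable $h$, and checking $h^+(E,\alpha)>h^*(E,\alpha)$ --- but that is the next step of the paper, not part of the statement under consideration.
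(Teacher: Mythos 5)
Your proposal is correct and follows essentially the same route as the paper: the paper likewise observes that, dropping the tildes, \eqref{scalarv1tilda} is exactly \eqref{scalarv1}, and then transfers the conclusions of the proofs of Theorems \ref{thaspcontrol} and \ref{thcontrol} (bistability, the sign of the wave speed via Lemmas \ref{lemmaspeed} and \ref{finallemma}, and the convergence result of \cite{fife}) to the pair $(\widetilde E,\widetilde h)$. Your explicit check that $\widetilde E=E\overline v>1$ and your separate treatment of the monostable range $\widetilde h<h_1(\widetilde E)$ are just slightly more detailed bookkeeping than the paper's one-line reduction.
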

\paragraph{Step 2.}
Recalling that $\widetilde{E}=E \overline{v}(E,h,\alpha)$  and $\widetilde{h}=\frac{h}{ \overline{v}(E,h,\alpha)}$ 
do depend on $h$, we set $\underline{\mu}=\underline{\mu}(E,h)=\overline{\mu}(\widetilde{E},\widetilde{h})$ and
the previous lemma gives 
 the following implicit condition on $h$ for invasion to  occur.\\
For any $x\in\R$, $\lim\limits_{t\to+\infty}u(t,x)= \underline{\mu}$, provided:
\begin{equation}\label{implicitcondition}
h> \overline{v}(E,h,\alpha)h^-\left(E  \overline{v}(E,h,\alpha)\right).
\end{equation}
The following lemma gives an equivalent condition for this implicit condition to occur. 
This ends the proof of  theorem \ref{thcontrol}.
\begin{lemma}\label{lemmadefh+} For any $E>1$ and $\alpha\geq 0$, there exists (a unique) $h^+(E,\alpha)\geq h^-(E)$ such that:\\
 $$\text{\eqref{implicitcondition} holds true if and only if $h>h^+(E,\alpha)$.}$$
\end{lemma}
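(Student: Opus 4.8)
The plan is to rewrite \eqref{implicitcondition} as the positivity of a single scalar function of $h$ and to show that this function is strictly increasing, so it changes sign exactly once. Fix $E>1$ and $\alpha\geq 0$ and set
$$\Phi(h)=\overline{v}(E,h,\alpha)\,h^{-}\!\big(E\,\overline{v}(E,h,\alpha)\big),\qquad G(h)=h-\Phi(h),$$
so that \eqref{implicitcondition} reads exactly $G(h)>0$. Since $\overline{v}(E,h,\alpha)=1+\frac{\alpha E}{1+Eh}\in[1,\,1+\alpha E]$ and $E>1$, we have $E\,\overline{v}(E,h,\alpha)\geq E>1$, so $h^{-}$ is always evaluated where it is defined (see the proof of Theorem \ref{thcontrol}), and $\Phi,G$ are continuous in $h$ on $(0,+\infty)$.

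\textbf{Step 1: monotonicity.} First I would check that $\Phi$ is nonincreasing in $h$, hence $G$ strictly increasing. The map $h\mapsto\overline{v}(E,h,\alpha)$ is nonincreasing, since $\partial_h\overline{v}=-\alpha E^{2}/(1+Eh)^{2}\leq 0$. By Property \ref{prophm} the map $E\mapsto h^{-}(E)$ is increasing, so the composition $h\mapsto h^{-}\!\big(E\,\overline{v}(E,h,\alpha)\big)$ is again nonincreasing. Being a product of two \emph{positive} nonincreasing functions of $h$, $\Phi$ is nonincreasing, and therefore $G(h_2)-G(h_1)\geq h_2-h_1>0$ whenever $h_2>h_1$; i.e. $G$ is strictly increasing.

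\textbf{Step 2: endpoint values and conclusion.} Next I would evaluate $G$ at $h=h^{-}(E)$ and as $h\to+\infty$. Writing $\overline{v}_0=\overline{v}\big(E,h^{-}(E),\alpha\big)\geq 1$ and using that $h^{-}$ is increasing,
$$\Phi\big(h^{-}(E)\big)=\overline{v}_0\,h^{-}(E\overline{v}_0)\geq 1\cdot h^{-}(E)=h^{-}(E),$$
so $G\big(h^{-}(E)\big)\leq 0$, with equality precisely when $\overline{v}_0=1$, i.e. when $\alpha=0$. On the other hand, as $h\to+\infty$ one has $\overline{v}(E,h,\alpha)\to 1$, hence by continuity of $h^{-}$ we get $\Phi(h)\to h^{-}(E)<+\infty$ and thus $G(h)\to+\infty$. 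By the intermediate value theorem together with the strict monotonicity of $G$, there is a unique $h^{+}=h^{+}(E,\alpha)\in[h^{-}(E),+\infty)$ with $G(h^{+})=0$, and $G(h)>0$ if and only if $h>h^{+}$; equivalently, \eqref{implicitcondition} holds if and only if $h>h^{+}(E,\alpha)$. The endpoint computation moreover gives $h^{+}(E,0)=h^{-}(E)$ and $h^{+}(E,\alpha)>h^{-}(E)$ for $\alpha>0$, consistently with Property \ref{prophp}.

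\textbf{Main obstacle.} There is no genuinely hard step left: the substantive analytic work (the sign characterization of the wave speed and the monotonicity and limits of $E\mapsto h^{-}(E)$) was already done in the proof of Theorem \ref{thcontrol} and Property \ref{prophm}. The only point needing a little care is the monotonicity of $\Phi$ in Step 1, which reduces to the elementary observation that a product of positive monotone functions is monotone once one knows that $h\mapsto\overline{v}$ is nonincreasing and that $h^{-}$ is increasing; one should simply be careful that both factors remain positive so the product inequality goes the right way.
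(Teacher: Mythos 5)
Your proof is correct and follows essentially the same route as the paper: both reduce \eqref{implicitcondition} to the sign of the map $h\mapsto h-\overline{v}(E,h,\alpha)\,h^{-}\!\left(E\,\overline{v}(E,h,\alpha)\right)$, prove it is strictly increasing (you via composition/product of positive monotone functions, the paper by differentiating and using $\partial_h\overline{v}<0$ and $\tfrac{dh^-}{dE}>0$), and conclude with the intermediate value theorem. Your evaluation at $h=h^{-}(E)$ even makes the inequality $h^{+}(E,\alpha)\geq h^{-}(E)$ (with equality exactly when $\alpha=0$) more explicit than the paper's check at $h=0$.
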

\noindent {\bf Proof of lemma \ref{lemmadefh+} :} 
Let $E>1$ and $\alpha\geq 0$ be fixed and define the function 
\begin{equation}\label{fEalpha}
\mathcal{F}_{E,\alpha}(h)=h- \overline{v}(E,h,\alpha)h^-\left(E  \overline{v}(E,h,\alpha)\right).
\end{equation}
By the construction of $h^-$ via the implicit function theorem, one knows that $\mathcal{F}_{E,\alpha}(\cdot)$ is a $C^1$ function.
Moreover, since $h^-$ and $\overline{v}$ are  bounded, we have
$\mathcal{F}_{E,\alpha}(+\infty)=+\infty$ and 
$$\mathcal{F}_{E,\alpha}(0)=-\overline{v}(E,0,\alpha)h^-(E\overline{v}(E,0,\alpha))=-(1+\alpha E)h^-(E(1+\alpha E))<0.$$
Therefore, it suffices to show that $\mathcal{F}_{E,\alpha}$ is increasing.\\
One has
$$\mathcal{F}_{E,\alpha}'(h)=1-\partial_h\overline{v}(E,h,\alpha)\left(h^-\left(E  \overline{v}\right)+\overline{v}\frac{dh^-}{dE}(E\overline{v})\right).$$
From the expression of $\overline{v}$, we infer $\partial_h\overline{v}<0$ and from the proof of the properties \ref{prophm}, we know that 
$\frac{dh^-}{dE}>0$. It follows that $\mathcal{F}_{E,\alpha}'(h)>0$. This ends the proof of the lemma.
\qed

\paragraph{Proofs of properties \ref{prophp}.}

\begin{itemize}
 \item {\bf Proof of the growth of   $h^+(E,\cdot)$ and $h^+(\cdot,\alpha)$.}\\
$h^+(E,\alpha)$ is characterized by an equality in \eqref{implicitcondition}, that is, 
\begin{equation}\label{defh+}
\mathcal{F}_{E,\alpha}(h^+(E,\alpha))=0
\end{equation}
where $\mathcal{F}_{E,\alpha}$ is defined in \eqref{fEalpha}.
Differentiating \eqref{defh+} with respect to $E$ gives 
$$\partial_E h^+(E,\alpha)\cdot \mathcal{F}'_{E,\alpha}(h^+(E,\alpha) )+\partial_E \mathcal{F}_{E,\alpha} (h^+(E,\alpha) )=0$$
and then
$$\partial_E h^+(E,\alpha)\cdot  \mathcal{F}'_{E,\alpha}(h^+(E,\alpha) )= \left[(E\overline{v}+h^-)\partial_E \overline{v}+\overline{v}^2\right]
\cdot \frac{dh^-}{dE}\left(E  \overline{v}\right).$$
wherein we have set
$$h^-=h^-\left(E  \overline{v}\right),\quad\overline{v}=\overline{v}(E,h^+(E,\alpha),\alpha)\text{ and }
\partial_E\overline{v}=\partial_E\overline{v}(E,h^+(E,\alpha),\alpha).$$
One already knows that $\mathcal{F}'_{E,\alpha}(h^+(E,\alpha) )>0$ and that $\frac{dh^-}{dE}>0$. 
A direct computation shows that $\partial_E\overline{v}>0$, which leads to $\partial_E h^+(E,\alpha)>0$ as needed.\\

Similarly, differentiating \eqref{defh+} with respect to $\alpha$ gives, with  obvious notations,

$$\partial_\alpha h^+(E,\alpha)\cdot \mathcal{F}'_{E,\alpha}(h^+(E,\alpha) )=\partial_\alpha\overline{v}\cdot \left(h^-+E\frac{dh^-}{dE}\right),$$
and since $\partial_\alpha\overline{v}>0$, one obtains $\partial_\alpha h^+(E,\alpha)>0$.\\

\item {\bf Limits of $h^+(E,\alpha)$ as $\alpha\to0$ and $\alpha \to +\infty$.}\\ 
From $\overline{v}(E,h,0)=1$ we deduce $h^+(E,0)=h^-(E)$.
Now, it is clear from the construction of $h^+$, that $h^+(E,\alpha)\geq h^*(E,\alpha)$. From the properties \ref{prophet}, we obtain
$h^+(E,\alpha) \to +\infty$ as $\alpha\to +\infty.$
\qed

\item {\bf Limit of $h^+(E,\alpha)$ as $E\to+\infty$.}\\
First, recall that $h^+(E,\alpha)$ is characterized by \eqref{defh+}, which reads
\begin{equation}\label{caracthplus}
h^+(E,\alpha)=\left(1+\alpha\frac{E}{1+Eh^+(E,\alpha)}\right)\cdot h^-\left(E+\alpha\frac{E^2}{1+Eh^+(E,\alpha)}\right).
 \end{equation}
Since $h^+(\cdot,\alpha)$ is increasing, there exists $\ell_\alpha\geq 0$ such that 
$\lim_{E\to+\infty} h^+(E,\alpha)=\frac{1}{\ell_\alpha}.$
Taking  the limit $E\to+\infty$ in \eqref{caracthplus} and using  $h^-(E)\to \frac{16}{3}$, one obtains
$\frac{1}{\ell_\alpha}=\frac{16}{3} \cdot (1+\alpha \ell_\alpha)$. The resolution of this equation ends the proof.
\qed
\end{itemize}

\noindent {\bf Acknowledgements}
The authors would like to thank the two anonymous reviewers for their valuable comments and suggestions to improve the quality of this manuscript.

\end{document}